\theoremstyle{plain}
\newtheorem{thm}{{\bf Theorem}}[section]
\newtheorem{cor}[thm]{{\bf  Corollary}}
\newtheorem{prop}[thm]{{\bf Proposition}}
\newtheorem{lemma}[thm]{{\bf Lemma}}
\newtheorem{claim}[thm]{{\bf Claim}}
\theoremstyle{definition}
\newtheorem{define}[thm]{{\bf Definition}}
\newtheorem{note}[thm]{{\bf Note}}
\newcommand{\cf}{\mathord{\mathrm{cf}}}
\newcommand{\dom}{\mathord{\mathrm{dom}}}
\newcommand{\size}[1]{\left\vert {#1} \right\vert}
\newcommand{\p}{\mathcal{P}}
\newcommand{\ot}{\mathord{\mathrm{ot}}}
\newcommand{\col}{\mathord{\mathrm{Col}}}
\newcommand{\seq}[1]{\langle {#1} \rangle}
\newcommand{\norm}[1]{\left\| {#1} \right\|}
\newcommand{\ka}{\kappa}
\newcommand{\la}{\lambda}
\newcommand{\om}{\omega}
\newcommand{\ga}{\gamma}
\newcommand{\calF}{\mathcal{F}}
\newcommand{\calG}{\mathcal{G}}
\newcommand{\calM}{\mathcal{M}}
\newcommand{\calN}{\mathcal{N}}
\newcommand{\HOD}{\mathrm{HOD}}
\newcommand{\ZF}{\mathsf{ZF}}
\newcommand{\ZFC}{\mathsf{ZFC}}
\newcommand{\AC}{\mathsf{AC}}
\newcommand{\DC}{\mathsf{DC}}
\title[A note on L\"owenheim-Skolem cardinals]{A note on L\"owenheim-Skolem cardinals}
\author[T. Usuba]{Toshimichi Usuba}
\date{\today}
\address[T. Usuba]
{Faculty of Science and Engineering,
Waseda University, 
Okubo 3-4-1, Shinjyuku, Tokyo, 169-8555 Japan}
\email{usuba@waseda.jp}
\keywords{Axiom of Choice, L\"owenheim-Skolem cardinal}
\subjclass[2010]{Primary 03E10, 03E25}
\begin{document}
\begin{abstract}
In this note we provide some applications of
L\"owenheim-Skolem cardinals introduced in \cite{U}.
\end{abstract}

\maketitle
\section{Introduction}
Throughout this note, our base theory is $\ZF$ 
unless otherwise specified. 
In Usuba \cite{U}, we introduced the notion of \emph{L\"owenheim-Skolem cardinal},
which corresponds to the downward L\"owenheim-Skolem theorem in the context of $\ZFC$.

\begin{define}
Let $\ka$ be an uncountable cardinal.
\begin{enumerate}
\item $\ka$ is \emph{weakly L\"owenheim-Skolem} (weakly LS, for short)
if for every $\gamma<\ka$, $\alpha \ge \ka$, and $x \in V_\alpha$,
there is $X \prec V_\alpha$ such that $V_\gamma \subseteq X$,
$x \in X$, and the transitive collapse of $X$ belongs to $V_\ka$.
\item $\ka$ is \emph{L\"owenheim-Skolem} (LS, for short)
if for every $\gamma<\ka$, $\alpha \ge \ka$, and $x \in V_\alpha$,
there is $\beta \ge \alpha$ and $X \prec V_\beta$ such that $V_\gamma \subseteq X$,
$x \in X$, ${}^{V_\gamma} (X \cap V_\alpha) \subseteq X$, 
and the transitive collapse of $X$ belongs to $V_\ka$.
\end{enumerate}
\end{define}
Note that if $\ka$ is a limit of LS (weakly LS, respectively) cardinals,
then $\ka$ is LS (weakly LS, respectively).
In \cite{U}, we proved that some non-trivial consequences of the Axiom of Choice $\AC$ holds
on the successor of a singular weakly LS cardinal:

\begin{thm}\label{1.2}
Suppose $\ka$ is a singular weakly LS cardinal (e.g., a singular limit of
weakly LS cardinals).
\begin{enumerate}
\item There is no cofinal map from $V_\ka$ into $\ka^+$,
hence $\ka^+$ is regular.
\item For every function $f$ from $V_\ka$ into the club filter over $\ka^+$,
the intersection $\bigcap f``V_\ka$ contains a club in $\ka^+$.
Hence the club filter is $\ka^+$-complete.
\item For every regressive function $f:\ka^+ \to \ka^+$,
there is $\alpha<\ka^+$ such that
the set $\{\eta<\ka^+ \mid f(\eta)=\alpha\}$ is stationary.
\end{enumerate}
\end{thm}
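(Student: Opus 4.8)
The engine is a pullback lemma that is the only point where weak LS is used. Suppose $X \prec V_\alpha$ with $V_\gamma \subseteq X$ and transitive collapse $\bar X \in V_\kappa$, and write $\pi\colon X \to \bar X$. Since $\bar X$ is transitive and lies in $V_\kappa$, we have $\ot(X \cap \mathrm{Ord}) = \mathrm{rank}(\bar X) < \kappa$. Consequently, if $s \in X$ is a function whose domain is contained in $V_\gamma$, then $s(v) \in X$ for every $v \in V_\gamma \subseteq X$, so $\range(s) \subseteq X$. In particular there is no surjection from any $V_\beta$ with $\beta < \kappa$ onto an ordinal $\geq \kappa$: such a surjection, placed in a weakly LS model $X$ with $V_\beta \subseteq V_\gamma \subseteq X$, would force $\kappa \subseteq X$ and hence $\ot(X \cap \mathrm{Ord}) \geq \kappa$.

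For (1), fix a cofinal $\langle \kappa_i : i<\mu\rangle$ in $\kappa$ with $\mu = \cf(\kappa) < \kappa$. Given a cofinal $g\colon V_\kappa \to \kappa^+$, pick $\mu < \gamma < \kappa$ and a weakly LS model $X$ with $V_\gamma \subseteq X$ and $g, \langle \kappa_i\rangle \in X$. Each $\delta_i := \sup(g``V_{\kappa_i})$ lies in $X$ and $\sup_i \delta_i = \kappa^+$, so $X \cap \kappa^+$ is cofinal in $\kappa^+$; composing $\pi(g)$ with $\pi^{-1}$ then yields a cofinal map from a set of rank $< \kappa$, hence a cofinal map $V_\beta \to \kappa^+$ with $\beta < \kappa$. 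It thus suffices to exclude cofinal maps $V_\beta \to \kappa^+$ for $\beta < \kappa$, and by the pullback lemma this is immediate \emph{once $\kappa^+$ is regular}: a cofinal subset of a regular $\kappa^+$ has order type $\kappa^+$, so composing with the collapsing isomorphism of the range turns the cofinal map into a surjection $V_\beta \to \kappa^+$. The substantive point is therefore regularity. If $\cf(\kappa^+) = \nu < \kappa$ with $\langle \delta_j : j<\nu\rangle$ cofinal, then no $\rho<\kappa$ bounds all $\size{\delta_j}$ (else $\sup_j \delta_j \le \rho^+ < \kappa^+$), so $\langle \size{\delta_j}\rangle$ is cofinal in $\kappa$; and if one had bijections $b_j\colon \delta_j \to \size{\delta_j}$ for all $j<\nu$ simultaneously, then $\eta \mapsto (j_\eta, b_{j_\eta}(\eta))$, with $j_\eta$ least such that $\eta < \delta_{j_\eta}$, would inject $\kappa^+$ into $\nu \times \kappa$, contradicting $\size{\nu\times\kappa} = \kappa$ (Gödel pairing) and the fact that $\kappa^+$ is a cardinal. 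I expect the main obstacle to be exactly the uniform choice of the $b_j$ in $\ZF$: this is the technical heart of the argument, where weak LS must be invoked to collect the relevant bijections into a single elementary submodel and make the $\nu$ choices at once through its set-sized transitive collapse.

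Part (2) is a covering argument layered on regularity. Since $\ot(X \cap \kappa^+) < \kappa < \kappa^+$, regularity gives $\sigma := \sup(X \cap \kappa^+) < \kappa^+$ for every weakly LS $X$; and for every club $C \in X$ the set $X \cap C$ is cofinal in $\sigma$, so $\sigma \in C$. Hence for each $i<\mu$ and each $\eta_0<\kappa^+$, a weakly LS model $X$ with $V_{\kappa_i+1} \subseteq X$ and $f, \eta_0 \in X$ yields $\sigma \in (\eta_0,\kappa^+)$ with $\sigma \in f(x)$ for all $x \in V_{\kappa_i}$, so $D_i := \bigcap_{x \in V_{\kappa_i}} f(x)$ is unbounded, hence (being an intersection of clubs) a club. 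Finally $\bigcap f``V_\kappa = \bigcap_{i<\mu} D_i$ is an intersection of $\mu < \kappa^+$ clubs, which contains a club by regularity of $\kappa^+$. The stated $\kappa^+$-completeness follows by applying this to the function sending each $\xi<\kappa$ to a given club $C_\xi$ and every other element of $V_\kappa$ to $\kappa^+$.

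For (3) I would try to produce a stationary fibre directly. Given regressive $f$, take a weakly LS $X \ni f$ and set $\sigma = \sup(X \cap \kappa^+) < \kappa^+$, so that $\sigma$ belongs to every club lying in $X$. If $f(\sigma) \in X$, then no club of $X$ can be disjoint from $f^{-1}(f(\sigma))$, since any such club would contain $\sigma \in f^{-1}(f(\sigma))$; by elementarity $f^{-1}(f(\sigma))$ is then stationary, as required. The obstacle is guaranteeing $f(\sigma) \in X$: weakly LS models are thin above $\gamma$, so $X \cap \kappa^+$ is not an initial segment and $f(\sigma) < \sigma$ need not lie in $X$. I would resolve this either by a reflection refinement that iterates the model construction so as to absorb the value $f(\sigma)$, or by using part (1) (no cofinal map $V_\kappa \to \kappa^+$) together with the completeness of (2) to replace the $\kappa^+$-indexed family of fibres by a $V_\kappa$-indexed family to which the covering argument of (2) applies; pinning down this absorption is the main difficulty for (3).
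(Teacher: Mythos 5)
This theorem is not proved in the note you were given (it is quoted from \cite{U}), so I assess your proposal against what an actual ZF proof requires; measured that way, it has genuine gaps in all three parts, the decisive one being in (1). Your reduction of (1) to excluding cofinal maps $V_\beta \to \ka^+$ with $\beta<\ka$ is fine, and your pullback lemma and Hartogs endgame are correct ingredients, but the whole weight then rests on regularity of $\ka^+$, and there your argument is the standard $\ZFC$ proof whose choice step you admit you cannot perform. The mechanism you propose to rescue it --- ``collect the relevant bijections into a single elementary submodel and make the $\nu$ choices at once through its set-sized transitive collapse'' --- does not work: the collapse $\overline{X}$ is an element of $V_\ka$, but in $\ZF$ elements of $V_\ka$ need not be well-orderable, so even with all witnesses inside $X$ you still have no way to select one bijection $b_j$ per $j<\nu$. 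The workable move is to avoid selection entirely: take $X \prec V_\alpha$ with $\seq{\delta_j \mid j<\nu} \in X$, $V_{\nu+1}\subseteq X$, collapse $\overline{X}\in V_\ka$, and let $T=\{(j,b) \mid j<\nu,\ b\in X,\ b\colon \delta_j \to \ka \text{ injective}\}$ be the set of \emph{all} witnesses (nonempty in each fibre by elementarity, since every ordinal $<\ka^+$ injects into $\ka$ in $\ZF$). Then $T\subseteq X$, so $A:=\pi``T \in V_\ka$, and $(a,\zeta)\mapsto (\pi^{-1}(a))^{-1}(\zeta)$ (value $0$ when undefined) is a surjection of $A\times\ka$ onto $\ka^+$. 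Each slice $B_\zeta$ is a surjective image of $A$, hence of some $V_\delta$ with $\delta<\ka$, so by your pullback lemma $\ot(B_\zeta)<\ka$; therefore $(\zeta,\xi)\mapsto$ the $\xi$-th element of $B_\zeta$ is a surjection $\ka\times\ka \to \ka^+$, and G\"odel pairing plus taking least preimages injects $\ka^+$ into $\ka$, contradicting that $\ka^+$ is a cardinal. This no-choice device (use the set of all witnesses, made small by one collapse) is the actual heart of the theorem, and it is exactly what your sketch lacks.

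In (2), the step ``$D_i$ \dots\ hence (being an intersection of clubs) a club'' is false: the sets $f(x)$ are members of the club \emph{filter}, not clubs, so $D_i=\bigcap_{x\in V_{\ka_i}}f(x)$ is, by your argument, unbounded but need not be closed, and proving that it contains a club is an instance of the very completeness you are trying to establish --- in $\ZF$ there is no canonical club inside each $f(x)$ to intersect. The repair needs submodels $X\prec V_\alpha$ with the \emph{whole} $V_\ka\subseteq X$ and $\norm{X}=\ka$, obtained as a directed union $\bigcup(\calF\cap Y)$ indexed inside one auxiliary collapsing model $Y$ (this is Lemma \ref{1} of the note, whose proof itself uses part (1)); then for each $x\in V_\ka$ the family $\calC_x$ of clubs lying in $X$ and contained in $f(x)$ is nonempty with $\norm{\calC_x}\le\ka$, so $C_x:=\bigcap\calC_x$ is a club by (1), and $\bigcap_{x\in V_\ka}C_x$ is a club contained in $\bigcap f``V_\ka$, again by (1). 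The same models repair (3): for such an $X$ the trace $X\cap\ka^+$ is an \emph{initial segment} $\delta$ of $\ka^+$ (because $\ka\subseteq X$), so $f(\delta)<\delta$ automatically lies in $X$ --- which is precisely the absorption you could not arrange, and cannot arrange, with models whose transitive collapse lies in $V_\ka$, since their traces on $\ka^+$ are never initial segments. So your (3) is incomplete exactly where you flag it, and the missing ingredient in all three parts is the same: the choice-free directed-union construction of large submodels, which your proposal gestures at but never carries out.
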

See \cite{U} for more information,
 and applications to choiceless set-theoretic geology.

In this note, we provide related results
and other applications of LS cardinals,
and we show that various consequences of $\AC$ follows from LS cardinals.

We will use the following notion in this note.
For a set $x$, let $\norm{x}$ be the least ordinal $\alpha$
such that there is a surjection from $V_\alpha$ onto $x$.
Note that:
\begin{itemize}
\item $\norm{V_\alpha}=\alpha$ for every infinite ordinal $\alpha$.
\item If $x \subseteq y$, or there is a surjection from $y$ onto $x$, then $\norm{x} \le \norm{y}$.
\item For every ordinal $\alpha$, there is a cardinal $\ka$ with
$\norm{\ka}>\alpha$.
\item If $\ka$ is weakly LS, then for every $\gamma<\ka$
there is no surjection from $V_\gamma$ onto $\ka$ (\cite{U}),
so $\norm{\ka}=\ka$.
\item Suppose $\ka=\norm{\ka}$. Let $\alpha \ge \ka$ 
and $X \prec V_\alpha$ be with $\norm{X}<\ka$.
Then the transitive collapse of $X$ is in $V_\ka$;
Let $\overline{X}$ be the transitive collapse of $X$,
and $\delta$ the rank of $\overline{X}$.
There is a surjection from $\overline{X}$ onto $\delta$,
so $\norm{\delta} \le \norm{\overline{X}}$.
Since the collapsing map is a bijection from $X$ onto $\overline{X}$, 
we have $\norm{\delta} \le \norm{\overline{X}} =\norm{X}<\ka$.
If $\delta \ge \ka$, then we know $\norm{\delta} \ge \norm{\ka}=\ka$, but this is impossible
and we obtain $\delta<\ka$.
\end{itemize}
Using this notion,
we can reformulate LS and weakly LS cardinals as follows:
\begin{itemize}
\item $\ka$ is weakly LS if and only if $\norm{\ka}=\ka$, and for every $\alpha \ge \ka$ and set $A \subseteq V_\alpha$ with $\norm{A}<\ka$,
there is $X \prec V_\alpha$ such that  $A \subseteq X$ and $\norm{X}<\ka$.
\item $\ka$ is LS if and only if $\norm{\ka}=\ka$, and for every $\gamma<\ka$, $\alpha \ge \ka$ and set $A \subseteq V_\alpha$ with $\norm{A}<\ka$,
there is $\beta \ge \alpha$ and $X \prec V_\beta$ such that  
$A \subseteq X$, $\norm{X}<\ka$, and for every $y \subseteq X \cap V_\alpha$,
if $\norm{y} \le \gamma$ then $y \in X$.
\end{itemize}

\section{On elementary submodels}
First let us present somewhat trivial characterization.
\begin{prop}
Let $\ka$ be an uncountable cardinal.
Then the following are equivalent:
\begin{enumerate}
\item $\ka$ is weakly LS.
\item $\norm{\ka}=\ka$ and for every first order structure $\calM=\seq{M;\ldots}$ with countable language 
and $A \subseteq M$ with $\norm{A} <\ka$,
there is an elementary submodel $\calN=\seq{N; \ldots} \prec \calM$
such that $A \subseteq N$ and $\norm{N}<\ka$.
\end{enumerate}
\end{prop}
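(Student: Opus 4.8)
The plan is to prove the two implications separately, with essentially all the content in $(1)\Rightarrow(2)$.

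For $(2)\Rightarrow(1)$ the argument is immediate once we recall the reformulation of ``weakly LS'' from the Introduction. Given $\alpha\ge\ka$ and $A\subseteq V_\alpha$ with $\norm{A}<\ka$, I would apply hypothesis (2) to the structure $\calM=\seq{V_\alpha;\in}$, whose language consists of a single binary relation symbol and is therefore countable, using the same set $A$. The resulting $\calN=\seq{N;\in}\prec\seq{V_\alpha;\in}$ is precisely an $X=N\prec V_\alpha$ with $A\subseteq X$ and $\norm{N}<\ka$. Together with the assumption $\norm{\ka}=\ka$, this is exactly the reformulation of weakly LS, so $\ka$ is weakly LS.

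For $(1)\Rightarrow(2)$, assume $\ka$ is weakly LS and let $\calM=\seq{M;\dots}$ with countable language and $A\subseteq M$, $\norm{A}<\ka$, be given. I would first fix a limit ordinal $\alpha\ge\ka$ large enough that $\calM\in V_\alpha$ and the satisfaction relation of $\calM$ lies in $V_\alpha$; then ``$\calM\models\varphi[\bar a]$'' is expressible over $\seq{V_\alpha;\in}$ with parameters $\calM,\bar a$. Next I set $A'=A\cup\{\calM\}$ and apply the reformulation of weakly LS to $\seq{V_\alpha;\in}$ and $A'$. To legitimise this I must check $\norm{A'}<\ka$: if $A$ is infinite then $\norm{A}\ge\omega$, so $V_{\norm{A}}\supseteq V_\omega\supseteq\omega$ is Dedekind-infinite, and composing a bijection $V_{\norm{A}}\to V_{\norm{A}}\sqcup\{\ast\}$ with a surjection sending the $V_{\norm{A}}$-part onto $A$ and $\ast$ to $\calM$ gives a surjection $V_{\norm{A}}\to A\cup\{\calM\}$; hence $\norm{A'}\le\norm{A}<\ka$ (the finite case being trivial since $\ka$ is uncountable). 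Weakly LS then yields $X\prec V_\alpha$ with $A'\subseteq X$ and $\norm{X}<\ka$.

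Finally I would put $N=X\cap M$, equip it with the induced structure $\calN$, and verify $\calN\prec\calM$ by the Tarski--Vaught test. Since $\calM\in X$ and the underlying set together with all interpretations is definable over $\seq{V_\alpha;\in}$ from $\calM$, all of these lie in $X$; and for a fixed formula $\varphi(y,\bar v)$ with parameters $\bar a\in N$, if $\calM\models\exists y\,\varphi(y,\bar a)$ then $V_\alpha\models\exists b\,(b\in M\wedge\calM\models\varphi(b,\bar a))$, so by $X\prec V_\alpha$ and $\calM,\bar a\in X$ there is a witness $b\in X\cap M=N$. Thus $\calN\prec\calM$; from $A\subseteq X$ and $A\subseteq M$ we get $A\subseteq N$, and $N\subseteq X$ gives $\norm{N}\le\norm{X}<\ka$. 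The genuinely delicate point is this last verification that $N=X\cap M$ is elementary in $\calM$, which rests on the definability of satisfaction for the set-structure $\calM$ over $\seq{V_\alpha;\in}$ (the reason I take $\alpha$ large enough to absorb the satisfaction relation). No choice is used, so the Tarski--Vaught argument goes through verbatim in $\ZF$; the only other thing to watch is the $\norm{\cdot}$-bookkeeping in passing from $A$ to $A\cup\{\calM\}$ and from $X$ to $X\cap M$, both controlled by monotonicity of $\norm{\cdot}$ under subsets and surjections.
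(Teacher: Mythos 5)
Your proposal is correct and follows essentially the same route as the paper's: the direction (2)$\Rightarrow$(1) specializes (2) to structures of the form $\seq{V_\alpha;\in}$, and (1)$\Rightarrow$(2) places $A$ together with $\calM$ inside an elementary submodel $X$ of a large $V_\alpha$ of norm $<\ka$ and cuts down to $N=X\cap M$, exactly as the paper does. The only cosmetic differences are that you route both implications through the reformulation of weak LS-ness stated in the Introduction and bound $\norm{N}$ by subset-monotonicity of $\norm{\cdot}$, where the paper instead works from the original definition (adding $x$ as a constant to the structure, obtaining $A\subseteq X$ as $f``x$ for a surjection $f$ with small domain) and bounds $\norm{M\cap X}$ via the collapsing-map bijection onto a set in $V_\ka$; you are also more explicit than the paper's ``it is easy to see'' about the Tarski--Vaught and satisfaction-definability step.
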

\begin{proof}
(2) $\Rightarrow$ (1). Take $\gamma<\ka$, $\alpha\ge \ka$, and $x \in V_\alpha$.
Identifying $x$ as a constant,
consider the structure $\seq{V_\alpha;\in, x}$.
By (2), we can find an elementary submodel $X \prec V_\alpha$
such that $x \in X$, $V_\gamma \subseteq X$, and $\norm{X}<\ka$.
If $Y$ is the transitive collapse of $X$,
then $Y$ must be in $V_\ka$.

(1) $\Rightarrow$ (2).  $\norm{\ka}=\ka$ is already noted above.
Fix a structure $\calM=\seq{M;\ldots}$ and $A \subseteq M$
such that there is a surjection $f$ from some $x \in V_\ka$ onto $A$.
Take a large $\alpha>\ka$ with $f, \calM \in V_\alpha$,
and take also a large $\gamma<\ka$ with $x \in V_\gamma$.
By (1), we can find $X \prec V_\alpha$
such that $f, \calM \in V_\alpha$, $V_\gamma \subseteq X$,
and the transitive collapse of $X$ is in $V_\ka$. Since $x \in V_\gamma \subseteq X$,
we have $A=f``x \subseteq X$.
Let $\calN=\seq{M \cap X;\ldots}$.
Since $A \subseteq X \prec V_\alpha$,
it is easy to see that $\calN \prec \calM$ and $A \subseteq  M \cap X$.
If $\pi:X \to Y$ is the collapsing map,
then $\pi \restriction M \cap X: M \cap X \to \pi(M)$ is a bijection
and $\pi(M) \in V_\ka$.
Then the inverse map gives a surjection from $\pi(M)$ onto $M \cap X$.
Hence $\norm{M \cap X}<\ka$.
\end{proof}

\section{On  successors of weakly LS cardinals}

In $\ZFC$,
for every cardinal $\ka$ and $\alpha \ge \ka^+$,
we can find an elementary submodel $X \prec V_\alpha$
with $\size{X}=\ka \subseteq X$ and $X \cap \ka^+ \in \ka^+$.
We can prove a similar result for singular weakly LS cardinals.
\begin{lemma}\label{1}
Let $\ka$ be a singular weakly LS cardinal.
Then for every $\alpha \ge \ka^+$ and $x \in V_\alpha$,
there is $X \prec V_\alpha$ such that
$x \in X$, $V_\ka \subseteq X$, $X \cap \ka^+ \in \ka^+$,
and $\norm{X}=\ka$.
\end{lemma}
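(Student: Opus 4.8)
The plan is to reduce the lemma to producing a single $X\prec V_\alpha$ with $V_\ka\subseteq X$, $\norm{X}=\ka$, and $\{x,\ka,V_\ka\}\subseteq X$; the remaining requirement $X\cap\ka^+\in\ka^+$ then comes for free. Indeed, once $\norm{X}=\ka$ a surjection from $V_\ka$ onto $X$ restricts to one onto $X\cap\ka^+$, so if $X\cap\ka^+$ were cofinal in $\ka^+$ we would obtain a cofinal map from $V_\ka$ into $\ka^+$, contradicting Theorem \ref{1.2}(1); hence $\sup(X\cap\ka^+)<\ka^+$. Downward closure is automatic from elementarity: for $\eta\in X\cap\ka^+$ we have $\norm{\eta}\le\ka$, so $V_\alpha$ satisfies ``there is a surjection from $V_\ka$ onto $\eta$''; as $\eta,\ka,V_\ka\in X\prec V_\alpha$, such a surjection $f$ lies in $X$, and since $V_\ka\subseteq X$ we get $\eta=f``V_\ka\subseteq X$. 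Thus $X\cap\ka^+$ is a bounded initial segment of $\ka^+$, i.e.\ an ordinal $<\ka^+$.

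To build such an $X$ I would exploit the singularity of $\ka$. Fix $\mu=\cf(\ka)<\ka$ and an increasing sequence $\seq{\ka_i:i<\mu}$ cofinal in $\ka$, and construct a $\subseteq$-increasing chain $\seq{X_i:i<\mu}$ of elementary submodels of $V_\alpha$ with $\norm{X_i}<\ka$, each containing the parameters $x,\ka,V_\ka,\seq{\ka_j:j<\mu}$ and satisfying $V_{\ka_i}\subseteq X_i$; then set $X=\bigcup_{i<\mu}X_i$. At stage $i$ the seed is $A_i=\bigcup_{j<i}X_j\cup V_{\ka_i}\cup\{x,\ka,V_\ka,\seq{\ka_j:j<\mu}\}$, and the reformulated weakly LS property produces $X_i\prec V_\alpha$ with $A_i\subseteq X_i$ and $\norm{X_i}<\ka$, provided $\norm{A_i}<\ka$. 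The latter holds because $\norm{V_{\ka_i}}=\ka_i<\ka$ and, writing $\pi_j$ for the transitive collapse of $X_j$, the maps $\pi_j^{-1}$ assemble into a surjection from $i\times V_\beta$ onto $\bigcup_{j<i}X_j$, where $\beta=\sup_{j<i}\mathrm{rank}(\overline{X_j})$; since $i<\mu=\cf(\ka)$ this supremum of $<\mu$ many ordinals $<\ka$ is again $<\ka$, so $\norm{\bigcup_{j<i}X_j}<\ka$.

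For the verification, $X\prec V_\alpha$ holds by the Tarski--Vaught criterion for unions of $\subseteq$-increasing chains of elementary submodels, since any finite tuple from $X$ lies in some $X_i$. We have $V_\ka=\bigcup_{i<\mu}V_{\ka_i}\subseteq X$, whence $\norm{V_\ka}=\ka\le\norm{X}$. For the reverse inequality I would use the whole sequence at once: the collapses $\overline{X_i}$ are transitive subsets of $V_\ka$, and $F(i,v)=\pi_i^{-1}(v)$ (for $v\in\overline{X_i}$) defines a surjection from $\mu\times V_\ka$ onto $\bigcup_i X_i=X$; as $\mu\times V_\ka\subseteq V_\ka$, this gives $\norm{X}\le\norm{\mu\times V_\ka}=\ka$. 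Hence $\norm{X}=\ka$, and by the first paragraph $X$ meets all the requirements.

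The main obstacle is carrying out the length-$\mu$ recursion in $\ZF$: weakly LS yields a nonempty class of admissible $X_i$ at each stage, but selecting one at every $i$ so as to form an actual function $i\mapsto X_i$ is a use of choice, and $\mu$ may be uncountable, so $\DC$ does not obviously suffice. Every other step is choice-free once the sequence exists as a set; I would stress that the norm computation, and hence the equality $\norm{X}=\ka$, genuinely needs the collapse maps $\seq{\pi_i:i<\mu}$ to form a set, not merely each $X_i$ to exist individually. I would therefore focus on extracting a canonical (definable-from-parameters) witness to the reformulated weakly LS property, so that the recursion runs via a genuine class operation $G$ with $X_i=G(\seq{X_j:j<i})$; absent that, the argument should be understood as proceeding after such a selector has been fixed, which is where the real difficulty lies.
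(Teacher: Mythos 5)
Your first paragraph (deducing $X\cap\ka^+\in\ka^+$ from $\norm{X}=\ka$, elementarity, and Theorem \ref{1.2}(1)) is correct, and so is the verification in your third paragraph \emph{once the chain $\seq{X_i\mid i<\mu}$ exists as a set}; both of those steps are essentially what the paper does at the end of its own proof. The fatal problem is exactly the one you flag in your final paragraph, and it is not a removable technicality: in $\ZF$ there is no way to run the length-$\mu$ recursion. The weakly LS property is a pure existence statement; at each stage $i$ it provides a nonempty family of admissible $X_i$'s but no distinguished member of it, and a definable-from-parameters selector is precisely the kind of object that fails to exist without choice (if $V_\alpha$ admitted definable Skolem functions, the LS-cardinal apparatus would largely be beside the point). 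Since $\mu=\cf(\ka)$ may be uncountable, even $\DC$ would not obviously suffice, and in any case no choice principle is available: the base theory is $\ZF$, and the whole point of the lemma is that it holds there. As written, your argument proves the lemma only from ``weakly LS plus a selector,'' which is a genuinely weaker result. Note also that you cannot repair it by taking $X_i$ to be the union of \emph{all} admissible witnesses at stage $i$: for every $a\in V_\alpha$ some witness contains $a$, so that union is all of $V_\alpha$.

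The paper avoids the recursion by a different device, which is the actual content of its proof: apply weak LS-ness \emph{once}, at a level $\beta>\alpha$, to get a single $Y\prec V_\beta$ with $\alpha,\ka,x\in Y$, $\cf(\ka)\subseteq Y$, and transitive collapse in $V_\ka$. Let $\calF$ be the definable family of all $Z\prec V_\alpha$ with $x\in Z$ whose transitive collapse lies in $V_\ka$, and set $X=\bigcup(\calF\cap Y)$. No choices are made: $\calF\cap Y$ exists outright as a set; it is upward directed, since for $Z_0,Z_1\in\calF\cap Y$ the model $Y$ sees surjections onto them from some $V_\gamma$ and, by elementarity, contains some $Z_2\in\calF$ with $V_\gamma\subseteq Z_2$; and since $\cf(\ka)\subseteq Y$ forces $Y\cap\ka$ to be cofinal in $\ka$, every $V_\gamma$ ($\gamma<\ka$) is contained in some member of $\calF\cap Y$. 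Directedness gives $X\prec V_\alpha$ and $V_\ka\subseteq X$, and the surjection from $V_\ka$ onto $X$ is assembled from the collapsing map of $Y$ together with the collapsing maps of the $Z\in\calF\cap Y$ --- this single set-sized family plays the role of your sequence $\seq{\pi_i\mid i<\mu}$, which is exactly the object your recursion cannot produce in $\ZF$. In short: where you try to \emph{choose} $\mu$-many submodels, the paper \emph{captures} a set-sized directed family of them inside one small elementary submodel of a higher rank level and takes its union.
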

\begin{proof}
Fix a large $\beta>\alpha$,
and take $Y \prec V_\beta$ such that
$\alpha, \ka, x \in Y$, $\cf(\ka) \subseteq Y$,
and the transitive collapse of $Y$ belongs to $V_\ka$.

Let $\calF=\{Z \prec V_\alpha \mid x \in Z$ and the transitive collapse of $Z$ is in $V_\ka\}$.
We have $\calF \in Y$.
\begin{claim}
\begin{enumerate}
\item $\calF \cap Y$ is upward directed,
that is, for every $Z_0, Z_1 \in \calF\cap Y$,
there is $Z_2 \in \calF \cap Y$ with $Z_0 \cup Z_1 \subseteq Z_2$.
\item For every $\gamma<\ka$,
there is $Z \in \calF \cap Y$ with $V_\gamma \subseteq Z$.
\end{enumerate}
\end{claim}
\begin{proof}[Proof of Claim]
(1) Take a large $\gamma<\ka$ such that the transitive collapses of $Z_0$ and $Z_1$ are in $V_\gamma$,
and take  surjections $f_0:V_\gamma \to Z_0$ and $f_1:V_\gamma \to Z_1$.
Since $Z_0, Z_1 \in Y \prec V_\beta$, we may assume that
$\gamma, f_0, f_1 \in Y$.
In $Y$, we can choose $Z_2' \prec V_{\alpha+\om}$
such that $V_\gamma \subseteq Z_2'$, $x, \alpha, f_0,f_1 \in Z_2'$,
and the transitive collapse of $Z_2'$ is in $V_\ka$.
Let $Z_2=Z_2' \cap V_\alpha \in Y$. We know $Z_2 \prec V_\alpha$, so
$Z_2 \in \calF \cap Y$.
Since $V_\gamma \subseteq Z_2$, we also have $Z_0 \cup Z_1 \subseteq Z_2$.

(2) Since $\cf(\ka) \subseteq Y$, we have that $Y \cap \ka$ is cofinal in $\ka$.
Hence for a given $\gamma<\ka$, there is $\delta \in Y \cap \ka$ with $\gamma \le \delta$.
Then in $Y$ we can choose $Z \in \calF$ with $V_\delta \subseteq Z$.
\end{proof}

Let $X=\bigcup(\calF \cap Y)$. By the claim above,
we have that $x \in X \prec V_\alpha$ and $V_\ka \subseteq X$.

\begin{claim}
There is a surjection from $V_\ka$ onto $X$.
\end{claim}
\begin{proof}[Proof of Claim]
Let $\overline{Y}$ be the transitive collapse of $Y$,
and $\pi:Y \to \overline{Y}$ the collapsing map.
For each $Z \in \calF \cap Y$,
let $\overline{Z}$ be the transitive collapse of $Z$, and
$\pi_z$ be the collapsing map.

Define $f:V_\ka \times V_\ka \to X$ as follows:
For $\seq{a, b} \in V_\ka \times V_\ka$,
if $\pi^{-1}(a) \in \calF \cap Y$ and
$b \in \overline{Z}$ (where $Z=\pi^{-1}(a)$),
then $f(a,b)=\pi^{-1}_{Z}(b)$.
Otherwise, $f(a,b)=\emptyset$.
This $f$ is an surjection; Take $c \in X$.
Then $c \in Z$ for some $Z \in \calF \cap Y$,
and $f(\pi(Z), \pi_Z(c))=c$.
We can easily take a surjection from $V_\ka$ onto $V_\ka \times V_\ka$,
hence we obtain a surjection from $V_\ka$ onto $X$.
\end{proof}

Finally, since there is no cofinal map from $V_\ka$ into $\ka^+$ by Theorem \ref{1.2},
we have that $\sup(X \cap \ka^+)<\ka^+$.
In addition, since $\ka \subseteq X$, we have
$\sup(X \cap \ka^+) \subseteq X$ and $\sup(X \cap \ka^+)=X \cap \ka^+ \in \ka^+$.
\end{proof}

Let $\ka$ be a singular weakly LS cardinal.
While we already knew that 
the club filter over $\ka^+$ is $\ka^+$-complete,
we do not know if it is normal.
Among this, we can construct a normal filter over $\ka^+$ which is 
definable with parameter $\ka^+$.

\begin{note}
Let $\ka$ be a cardinal,
and $F$ a filter over $\ka$.
Then the following are equivalent:
\begin{enumerate}
\item For every $X_\alpha \in F$ $(\alpha<\ka)$,
the diagonal intersection 
$\triangle_{\alpha<\ka} X_\alpha
=\{\beta<\ka \mid \beta \in X_\alpha$ for all $\alpha<\beta\}$ is in $F$.
\item For every $X \in F^+$ and  regressive function $f:X \to \ka$,
there is $\alpha<\ka$ with $\{\beta \in X \mid f(\beta)=\alpha\} \in F^+$.
\end{enumerate}
Where $F^+=\{X \in \p(\ka) \mid X \cap C \neq \emptyset$ for every $C \in F\}$.
An element of $F^+$ is an \emph{$F$-positive set}.
We say that a filter $F$ is \emph{normal}
if $F$ is proper, contains all co-bounded subsets of $\ka$, 
and satisfies the above conditions (1) and/or (2).
\end{note}

\begin{prop}
Let $\ka$ be a singular weakly LS cardinal.
Let $F \subseteq \p(\ka^+)$ be the set such that:
$D \in F \iff$ there is $\alpha \ge \ka^+$ and $x \in V_\alpha$ such that
$D$ contains the set $\{\eta<\ka^+ \mid$ there is $X \prec V_\alpha$ with
$x \in X$ and $\eta =X \cap \ka^+\}$.
Then $F$ is a normal filter over $\ka^+$.
\end{prop}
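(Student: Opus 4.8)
The plan is to verify in turn that $F$ is a filter, that it contains every co-bounded set, and that it is closed under diagonal intersections. Throughout I write, for $\alpha \ge \ka^+$ and $x \in V_\alpha$,
\[
S_{\alpha,x}=\{\eta<\ka^+ \mid \text{there is } X \prec V_\alpha \text{ with } x \in X \text{ and } \eta = X \cap \ka^+\},
\]
so that $D \in F$ exactly when $S_{\alpha,x} \subseteq D$ for some such pair $(\alpha,x)$; upward closure of $F$ is then immediate. The one fact I would isolate and reuse is the following \emph{reflection} observation: if $\ka^+ \le \alpha \le \alpha'$, $x \in V_\alpha$, and $X \prec V_{\alpha'}$ has $\alpha,x \in X$, then $X \cap V_\alpha \prec V_\alpha$ (the standard fact that an elementary submodel containing an ordinal $\alpha \le \alpha'$ meets $V_\alpha$ in an elementary submodel of $V_\alpha$), while $x \in X \cap V_\alpha$ and $(X \cap V_\alpha)\cap \ka^+ = X \cap \ka^+$ because $\alpha \ge \ka^+$; hence whenever $\eta = X \cap \ka^+$ we also get $\eta \in S_{\alpha,x}$. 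Consequently $S_{\alpha',\seq{\alpha,x}} \subseteq S_{\alpha,x}$, i.e.\ a generator may always be pushed up to any larger rank by coding the old pair into the new parameter.

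For properness and the co-bounded part: by Lemma \ref{1}, for every $\alpha \ge \ka^+$ and $x \in V_\alpha$ there is $X \prec V_\alpha$ with $x \in X$ and $X \cap \ka^+ \in \ka^+$, so every generator $S_{\alpha,x}$ is nonempty; hence $\emptyset \notin F$ and $F$ is proper. For finite intersections, given $S_{\alpha_0,x_0} \subseteq D_0$ and $S_{\alpha_1,x_1} \subseteq D_1$ I would take $\alpha' \ge \max(\alpha_0,\alpha_1)$ and the parameter $x'=\seq{\alpha_0,x_0,\alpha_1,x_1}$, and apply the reflection observation coordinatewise to obtain $S_{\alpha',x'} \subseteq S_{\alpha_0,x_0}\cap S_{\alpha_1,x_1} \subseteq D_0 \cap D_1$. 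For the co-bounded filter, given $\delta<\ka^+$ the generator $S_{\ka^+,\delta}$ suffices: if $\eta = X \cap \ka^+$ with $\delta \in X \prec V_{\ka^+}$ then $\delta \in X \cap \ka^+ = \eta$, so $\eta > \delta$, whence $S_{\ka^+,\delta} \subseteq (\delta,\ka^+)$ and every co-bounded set lies in $F$ by upward closure.

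The substance is normality. Given $D_\xi \in F$ for $\xi<\ka^+$, I would aim for a single generator inside $\triangle_{\xi<\ka^+}D_\xi$, taking as parameter the sequence $x=\seq{D_\xi : \xi<\ka^+}$ itself together with a suitably large rank $\alpha$. The delicate point is to avoid choosing a sequence of witnesses for the statements $D_\xi \in F$, which is unavailable in $\ZF$. Instead I would, for each $\xi$, let $\rho(\xi)$ be the \emph{least} $\alpha' \ge \ka^+$ for which some $x' \in V_{\alpha'}$ has $S_{\alpha',x'}\subseteq D_\xi$; this $\rho$ is definable from $x$, so by Replacement $\alpha_0 = \sup_{\xi<\ka^+}\rho(\xi)$ is an ordinal, and I take any limit $\alpha$ with $\alpha > \alpha_0$ and $x \in V_\alpha$. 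For $\alpha'<\alpha$ the relation $\eta \in S_{\alpha',x'}$ is absolute between $V_\alpha$ and $V$, so $V_\alpha$ satisfies ``$D_\xi \in F$'' for every $\xi$. Now fix $\beta \in S_{\alpha,x}$, witnessed by $X \prec V_\alpha$ with $x \in X$ and $\beta = X \cap \ka^+$, and fix $\xi<\beta$. Then $\xi \in X$ and $D_\xi \in X$, so by elementarity $X$ itself sees a witness: there are $\alpha'',x'' \in X$ with $\ka^+ \le \alpha''<\alpha$, $x'' \in V_{\alpha''}$, and $S_{\alpha'',x''}\subseteq D_\xi$ (the last relation transferring back to $V$ by the same absoluteness). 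Applying the reflection observation to $X$ and $\alpha''$ yields $\beta = (X \cap V_{\alpha''})\cap \ka^+ \in S_{\alpha'',x''} \subseteq D_\xi$. Since $\xi<\beta$ was arbitrary, $\beta \in \triangle_{\xi<\ka^+}D_\xi$; thus $S_{\alpha,x} \subseteq \triangle_{\xi<\ka^+}D_\xi$ and the diagonal intersection lies in $F$.

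The main obstacle I anticipate is exactly the choiceless bookkeeping of the last paragraph: one cannot form a sequence of parameters $x''$ witnessing the memberships $D_\xi \in F$, so for each relevant $\xi$ the witness must instead be produced \emph{internally} by the elementary submodel $X$, and the ranks must be bounded in advance by the definable least-witness function $\rho$ together with Replacement. The remaining care is routine but essential: the standard verification that $X \cap V_{\alpha''} \prec V_{\alpha''}$ when $\alpha'' \in X$, and the absoluteness of the defining relation $\eta \in S_{\alpha'',x''}$ between $X$, $V_\alpha$ and $V$ (guaranteed because $\p(V_{\alpha''}) \subseteq V_\alpha$ for $\alpha''<\alpha$), which is what lets a witness found by $X$ be a genuine witness in $V$.
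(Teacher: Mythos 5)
Your proof is correct and follows essentially the same route as the paper: both arguments hinge on bounding the witness ranks for all $D_\xi$ in advance, taking an elementary submodel containing the whole sequence $\seq{D_\xi \mid \xi<\ka^+}$, letting that submodel produce the witnesses $(\alpha'',x'')$ internally (avoiding choice), and then using the restriction fact $X \cap V_{\alpha''} \prec V_{\alpha''}$ with $(X \cap V_{\alpha''}) \cap \ka^+ = X \cap \ka^+$ to conclude $X \cap \ka^+ \in S_{\alpha'',x''} \subseteq D_\xi$. The only differences are presentational: you argue directly that the generator is contained in the diagonal intersection where the paper argues by contradiction, and you spell out the filter axioms, the least-witness-rank function, and the absoluteness points that the paper compresses into ``fix a large $\beta$'' and ``one can check.''
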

\begin{proof}
By Lemma \ref{1}, we have that 
$\emptyset \notin F$ and $\ka^+ \setminus \eta \in F$ for every $\eta<\ka^+$.
One can check that $F$ is a filter over $\ka^+$.
For the normality,
take a family $\{D_\gamma \in F \mid \gamma<\ka^+\}$,
and let $D=\triangle_{\gamma<\ka^+} D_\gamma$.
Suppose to the contrary that $D \notin F$.
Now fix a large $\beta>\ka^+$ such that
for every $\gamma<\ka^+$, there is $\alpha <\beta$ and $x \in V_\alpha$ such that
$\{\eta<\ka^+ \mid$ there is $X \prec V_\alpha$ with
$x \in X$ and $\eta =X \cap \ka^+\} \subseteq D_\gamma$.
Since $D \notin F$,
we can find $Y \prec V_\beta$ such that $Y$ contains all relevant objects,
$Y \cap \ka^+ \in \ka^+$, and $Y \cap \ka^+ \notin D$. Let $\delta=Y \cap \ka^+$.
We see that $\delta \in D_\gamma$ for every $\gamma<\delta$, this is a contradiction.

Take $\gamma<\delta$. Then $\gamma \in Y$.
Hence we can find $\alpha \in Y $ and $x \in Y\cap V_\alpha$
such that
$\{\eta<\ka^+ \mid$ there is $X \prec V_\alpha$ with
$x \in X$ and $\eta =X \cap \ka^+\} \subseteq D_\gamma$.
However, since $\alpha, x \in Y$, we have $x \in Y \cap V_\alpha \prec V_\alpha$
and $(Y \cap V_\alpha) \cap \ka^+=\delta$,
hence $\delta \in D_\gamma$.
\end{proof}

For a non-empty set $S$,
let $\col(S)$ be the poset of all finite partial functions from $\om$ to $S$ with the 
reverse inclusion order.
The forcing with $\col(S)$ adds a surjection from $\om$ onto $S$.

\begin{prop}
Let $\ka$ be a singular weakly LS cardinal.
Then $\col(V_\ka)$ forces $(\ka^+)^V=\om_1$ and the Dependent Choice $\DC$.
\end{prop}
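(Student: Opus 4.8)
The plan is to prove the two assertions separately, each reducing to the fact that $V_\ka$ is "small" in the $\norm{\cdot}$-sense together with the bounding and reflection phenomena already established. Throughout, note that since $\ka$ is an infinite cardinal (a limit ordinal), every finite partial function $\om\to V_\ka$ lies in $V_\ka$, so $\col(V_\ka)\subseteq V_\ka$ and in particular $\norm{\col(V_\ka)}\le\ka$.

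For the cardinal computation I would show $(\ka^+)^V=\om_1^{V[G]}$ by proving that every $\alpha<(\ka^+)^V$ becomes countable while $(\ka^+)^V$ itself stays uncountable. For the first point, in $V$ every $\alpha<\ka^+$ admits a surjection from $V_\ka$ (as $\norm{\alpha}\le\ka$), and $\col(V_\ka)$ adds a surjection $\om\to V_\ka$, so composing makes each such $\alpha$ countable in $V[G]$. For the second point I would run the usual antichain-bounding argument in its $\norm{\cdot}$-form: given a name $\dot f$ for a function $\om\to(\ka^+)^V$, for each $n$ the set $S_n$ of ordinals $\xi$ with $p\Vdash\dot f(\check n)=\check\xi$ for some $p$ is the image of a partial map from $\col(V_\ka)$, so $\norm{S_n}\le\norm{\col(V_\ka)}\le\ka$. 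By Theorem \ref{1.2}(1) there is no cofinal map from $V_\ka$ into $\ka^+$, hence each $S_n$ is bounded by some $\beta_n<(\ka^+)^V$; the sequence $\seq{\beta_n:n<\om}$ is definable from $\dot f$ in $V$, and regularity of $(\ka^+)^V$ keeps $\sup_n\beta_n<(\ka^+)^V$. Thus $\dot f$ is forced bounded, no surjection $\om\to(\ka^+)^V$ appears, and combining the two points gives $(\ka^+)^V=\om_1^{V[G]}$.

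For $\DC$ I would use the $\ZF$-equivalent form that every pruned tree has a cofinal branch. Let $(A,R)\in V[G]$ be a $\DC$-instance, with names $\dot A,\dot R$ and a condition $p_0\in G$ forcing the hypotheses, and let $\dot T$ name the tree of finite $R$-chains, which $p_0$ forces to be a pruned tree. The essential device is Lemma \ref{1}: fixing $\alpha$ large with these names in $V_\alpha$, I take $Y\prec V_\alpha$ with $\dot A,\dot R,\dot T,p_0\in Y$, $V_\ka\subseteq Y$, and $\norm{Y}=\ka$. Since $\col(V_\ka)\subseteq V_\ka\subseteq Y$, the collapsing map $\pi\colon Y\to\overline{Y}$ fixes the poset pointwise and fixes $G$, so $G$ is $\col(V_\ka)$-generic over $\overline{Y}$ and $\pi$ lifts to an isomorphism $\pi^+\colon Y[G]\to\overline{Y}[G]$ sending $T\in Y[G]$ to a pruned tree $\overline{T}=\pi(\dot T)^G$ in $\overline{Y}[G]$. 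Now $\norm{Y}=\ka$ gives $\norm{\overline{Y}}=\ka$, so $\overline{Y}$ is a surjective image of $V_\ka$ and is countable in $V[G]$; as $\col(V_\ka)$ is countable there too, $\overline{Y}[G]$, and hence $\overline{T}$, is countable in $V[G]$. A countable pruned tree has a branch constructible by a fixed enumeration with no use of choice, so I get $\overline{b}=\seq{\overline{b}_n:n<\om}\in V[G]$ through $\overline{T}$; pulling each node back by $(\pi^+\restriction T)^{-1}$ and using that $\pi^+$ preserves end-extension yields a branch $b=\seq{b_n:n<\om}$ through $T$ in $V[G]$, i.e.\ the desired $\DC$-sequence.

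The main obstacle is precisely the interaction of the transitive collapse with the forcing: for the pulled-back sequence to be a branch through the true tree $T$, I need $G$ to remain generic over the collapsed model and the embedding to lift to the extensions. This is exactly why Lemma \ref{1}, and not a model with $\norm{Y}<\ka$, is indispensable: the clause $V_\ka\subseteq Y$ forces $\col(V_\ka)\subseteq Y$, so $\pi$ acts trivially on the poset and on $G$, making $G$ generic over $\overline{Y}$ and allowing $\pi^+$ to transport branches. For a model satisfying only $\norm{Y}<\ka$ the relevant suborder $Y\cap\col(V_\ka)$ would fail to be regularly embedded and the projected filter need not be generic; securing the genericity of $G$ over $\overline{Y}$ is therefore the crucial step, after which the countability of $\overline{Y}[G]$ in $V[G]$ does the remaining work.
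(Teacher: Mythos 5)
Your proof is correct, and both halves rest on the same pillars as the paper's: Theorem \ref{1.2}(1) for the bounding argument and Lemma \ref{1} for the elementary submodel with $V_\ka\subseteq Y$ and $\norm{Y}=\ka$. For the cardinal computation your argument is essentially the paper's, just phrased for functions $\om\to(\ka^+)^V$ (preservation of uncountability) rather than $\ka\to(\ka^+)^V$ (preservation of regularity); both reduce to the non-existence of cofinal maps from $V_\ka$ into $\ka^+$. For $\DC$ the routes genuinely diverge. The paper never passes to a transitive collapse: it works with $X[G]=\{\dot x_G \mid \dot x \in X \text{ a name}\}$ directly, observes that the canonical surjection $X\to X[G]$ composed with a surjection $V_\ka\to X$ makes $X[G]$ countable (hence well-orderable) in $V[G]$, and then builds the $\DC$-sequence inside $S\cap X[G]$ by a plain recursion, using elementarity of $X[G]\prec V[G]_\alpha$ to guarantee that every element of $S\cap X[G]$ has an $R$-successor in $S\cap X[G]$. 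Your detour through $\overline{Y}$ --- genericity of $G$ over the collapse, lifting $\pi$ to $\pi^+\colon Y[G]\to\overline{Y}[G]$, finding a branch through the collapsed tree and pulling it back --- is sound: the key point, that any dense set $D\in Y$ satisfies $D\subseteq\col(V_\ka)\subseteq V_\ka\subseteq Y$ and is therefore fixed pointwise by $\pi$, is exactly right, and your closing remark correctly identifies why $V_\ka\subseteq Y$ rather than merely $\norm{Y}<\ka$ is what makes the transfer work (the same clause is what the paper needs for $X[G]\prec V[G]_\alpha$). But this imports machinery the problem does not require: once you know $Y[G]\prec V[G]_\alpha$ and that $Y[G]$ is countable in $V[G]$ --- both of which your argument already uses --- you can run the recursion in $Y[G]$ itself and never mention the collapse or the tree of finite $R$-chains. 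What your version buys is a reusable transfer principle (generic filters survive collapses that fix the poset pointwise); what the paper's version buys is brevity, since it needs no verification that forcing over the collapsed model behaves correctly.
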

\begin{proof}
For the equality $(\ka^+)^V=\om_1$,
take $p \in \col(V_\ka)$ and a name $\dot f$ for a function from $\ka$ to $\ka^+$.
Define $F:\col(V_\ka) \times \ka \to \ka^+$ as follows:
For $\seq{q, \alpha} \in \col(V_\ka) \times \ka$,
if $q \le p$ and $q \Vdash \dot f(\alpha)=\eta$'' for $\eta<\ka^+$,
then $F(q, \alpha)=\eta$.
Otherwise, let $F(q, \alpha)=0$.
By Theorem \ref{1.2}, $F$ is not a cofinal map,
hence $\gamma=\sup(F``(\col(V_\ka) \times \ka))<\ka^+$.
Then it is clear that $p \Vdash$``$\dot f``\ka \subseteq \gamma$'',
so $p \Vdash$``$(\ka^+)^V$ is regular''.

To show that $\col(V_\ka)$ forces $\DC$,
take $p \in \col(V_\ka)$, $\col(V_\ka)$-names $\dot S$ and $\dot R$
such that $p \Vdash$``$\dot S$ is non-empty, $\dot R \subseteq \dot S^2$,
and for every $x \in \dot S$ there is $y \in \dot S$ with $\seq{x,y} \in \dot R$''.

Fix a large limit $\alpha>\ka^+$ with $\dot S, \dot R \in V_\alpha$.
By Lemma \ref{1},
we can find $X \prec V_\alpha$ such that
$V_\ka \subseteq X$, $X$ contains all relevant objects, $X \cap \ka^+ \in \ka^+$,
and $\norm{X}=\ka$.
Note that $\col(V_\ka) \subseteq X$.

Take a $(V, \col(V_\ka))$-generic $G$,
and let $X[G]=\{\dot x_G \mid \dot x \in X$ is a $\col(V_\ka)$-name$\}$
(where $\dot x_G$ is the interpretation of $\dot x$ by $G$).
Since $\col(V_\ka) \subseteq X$, we may assume $X[G] \prec V_\alpha[G]=V[G]_\alpha$.
There is a canonical surjection from $X$ onto $X[G]$,
namely $\dot x \mapsto \dot x_G$, 
hence we can take a surjection from $V_\ka$ onto $X[G]$.
Since $V_\ka$ is countable in $V[G]$,
we have that $X[G]$ is countable as well.
Let $S=\dot S_G$ and $R=\dot R_G$. We know $S, R \in X[G]$.
By the elementarity of $X[G]$, for every $x \in S \cap X[G]$,
there is $y \in S \cap X[G]$ with $\seq{x,y} \in R$.
Now $X[G]$ is well-orderable,
thus we can take a map $f:\om \to S \cap X[G]$ 
such that $\seq{f(n), f(n+1)} \in R$ for every $n<\om$.
\end{proof}

When $\ka$ is regular weakly LS,
we can obtain a similar result to Lemma \ref{1}.
Note that  in $\ZFC$, $\ka$ is regular weakly LS if and only if $\ka$ is inaccessible.
\begin{lemma}\label{reg}
Let $\ka$ be a regular weakly LS cardinal.
Then for every $\gamma<\ka$, $\alpha >\ka$, and $x \in V_\alpha$,
there is $X \prec V_\alpha$ such that 
$x \in X$, $\gamma<X \cap \ka \in \ka$, $V_{X \cap \ka} \subseteq X$,
and the transitive collapse of $X$ is in $V_\ka$.
\end{lemma}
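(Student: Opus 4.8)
The plan is to imitate the proof of Lemma \ref{1}, but to exploit the regularity of $\ka$ so as to control both the ordinal $X\cap\ka$ and the size of $X$ more tightly; this is what lets me get $X\cap\ka$ to be an ordinal \emph{below} $\ka$ and the collapse of $X$ to land in $V_\ka$ (rather than merely $\norm{X}\le\ka$).

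First I would fix a sufficiently large $\beta>\alpha$ and, using that $\ka$ is weakly LS, choose $Y\prec V_\beta$ with $\gamma,\alpha,\ka,x\in Y$ and $\norm{Y}<\ka$ (equivalently, the transitive collapse of $Y$ lies in $V_\ka$). Let
$$\calF=\{Z \mid Z\prec V_\alpha,\ x\in Z,\ \norm{Z}<\ka\},$$
so $\calF\in Y$, and put $X=\bigcup(\calF\cap Y)$. Exactly as in Lemma \ref{1}, I would check that $\calF\cap Y$ is upward directed and that for every $\rho\in Y\cap\ka$ there is $Z\in\calF\cap Y$ with $V_\rho\subseteq Z$: both are instances of weakly LS reflected into $Y$ by elementarity (for directedness apply it to $A=Z_0\cup Z_1$, which lies in $Y$ and has $\norm{A}<\ka$; for cofinality apply it to $V_\rho$ with $\rho\in Y\cap\ka$). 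Directedness then gives $X\prec V_\alpha$, and clearly $x\in X$.

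Next I would compute $X\cap\ka$. Set $\delta=\sup(Y\cap\ka)$. Since $\norm{Y}<\ka$, the order type of $Y\cap\ka$ is below $\ka$, so by regularity of $\ka$ the set $Y\cap\ka$ is bounded and $\delta<\ka$; being closed under successors, $Y\cap\ka$ has no maximum, so $\delta$ is a limit and $\gamma<\delta$. The cofinality claim yields $V_\rho\subseteq X$ for every $\rho\in Y\cap\ka$, hence $V_\delta\subseteq X$ and in particular $\delta\subseteq X\cap\ka$. For the reverse inclusion, for each $Z\in\calF\cap Y$ the ordinal $\ot(Z\cap\ka)$ has norm at most $\norm{Z}<\ka$, so $\ot(Z\cap\ka)<\ka$, and by regularity $\sup(Z\cap\ka)<\ka$; as this supremum is definable from $Z,\ka\in Y$, it lies in $Y\cap\ka$ and is therefore $<\delta$. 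Thus $Z\cap\ka\subseteq\delta$ for every such $Z$, giving $X\cap\ka\subseteq\delta$, and hence $X\cap\ka=\delta\in\ka$ with $\gamma<\delta$ and $V_{X\cap\ka}=V_\delta\subseteq X$.

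Finally, and this is the step I expect to be the crux, I would bound $\norm{X}$ strictly below $\ka$ by refining the surjection used in Lemma \ref{1}. Writing $\pi\colon Y\to\overline{Y}$ for the collapse and $\pi_Z$ for the collapse of $Z$, the same recipe $\seq{a,b}\mapsto\pi^{-1}_{\pi^{-1}(a)}(b)$ maps onto $X$. The new observation is that the relevant second coordinates stay bounded: for $Z\in\calF\cap Y$ the ordinal $\mathrm{rank}(\overline{Z})$ is definable from $Z\in Y$ and is $<\ka$, hence lies in $Y\cap\ka$, hence is $<\delta$, so $\overline{Z}\subseteq V_\delta$ for every such $Z$. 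Consequently the surjection can be taken with domain $\overline{Y}\times V_\delta$ in place of $V_\ka\times V_\ka$. Since $\norm{\overline{Y}}<\ka$ and $\norm{V_\delta}=\delta<\ka$, the product has norm below $\ka$, whence $\norm{X}<\ka$ and the transitive collapse of $X$ belongs to $V_\ka$. The main obstacle is precisely this uniform boundedness of the ranks $\mathrm{rank}(\overline{Z})$ below $\delta$: it is what upgrades the bound $\norm{X}\le\ka$ available in the singular case to the strict bound $\norm{X}<\ka$ needed here, and it rests essentially on the regularity of $\ka$.
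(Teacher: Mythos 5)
Your proof is correct and takes essentially the same route as the paper's: the same auxiliary $Y \prec V_\beta$ with transitive collapse in $V_\ka$, the same family $\calF$ and union $X=\bigcup(\calF\cap Y)$, the same use of regularity to bound $\sup(Y\cap\ka)$ below $\ka$, and the same collapse-indexed surjection to get $\norm{X}<\ka$. The only cosmetic difference is that you keep the surjection's domain as $\overline{Y}\times V_\delta$, whereas the paper simply enlarges $\delta<\ka$ so that a single $V_\delta$ contains $\overline{Y}$ and all the collapses $\overline{Z}$.
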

\begin{proof}
Similar to Lemma \ref{1}.
Take a large $\beta>\alpha$
and $Y \prec X_\beta$ such that $Y$ contains all relevant objects, and
the transitive collapse of $Y$ is in $V_\ka$.
We know $\sup(Y \cap \ka)<\ka$;
If $\sup(Y \cap \ka)=\ka$, then $\ot(Y \cap \ka)=\ka$ since $\ka$ is regular.
However then the transitive collapse of $Y$ cannot be in $V_\ka$.

Let $\calF=\{Z \prec V_\alpha\mid x \in Z$, the transitive collapse of $Z$ is in $V_\ka\}$,
and $X=\bigcup(\calF \cap Y)$. We have $X \prec V_\alpha$.
For every $Z \in \calF \cap Y$, we have that $Z \cap \ka\le \sup(Y \cap \ka)$,
and for every $\delta \in Y \cap \ka$,
there is $Z \in \calF \cap Y$ with $V_\delta \subseteq Z$.
Hence $X \cap \ka =\sup(Y \cap \ka)$ and $V_{X \cap \ka} \subseteq X$.

Take a large $\delta<\ka$ such that the transitive collapse of $Y$ is in $V_\delta$.
Note that $\sup(Y \cap \ka) \le \delta$.
For every $Z \in \calF \cap Y$, the transitive collapse of $Z$ is also in $V_{\delta}$.
Then as in Lemma \ref{1} we can define a surjection from $V_\delta$ onto $X$.
Therefore $\norm{X} \le \delta<\ka$, and the transitive collapse of $X$ is in $V_\ka$.
\end{proof}

\section{On  successors of LS cardinals}
If $\ka$ is a singular LS cardinal and there is a weakly LS cardinal $ \le \cf(\ka)$,
we can take a small elementary submodel with certain closure property.
We use the following fact:
\begin{thm}[\cite{U}]\label{fact3.7}
Let $\ka$ be a weakly LS cardinal.
Then for every cardinal $\la \ge \ka$ and $x \in V_\ka$,
there is no cofinal map from $x$ into $\la^+$.
\end{thm}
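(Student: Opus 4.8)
The plan is first to reduce the statement to a lower bound on the cofinality of $\la^+$, and then to reflect a hypothetical short cofinal sequence through a norm-small elementary submodel, where it should collide with the classical fact that a successor cardinal cannot be covered by few small pieces. For the reduction, observe that for an ordinal $\theta$ we have $\norm{\theta}<\ka$ if and only if $\theta<\ka$: if $\theta<\ka$ then $\norm{\theta}\le\theta<\ka$, while if $\theta\ge\ka$ then $\norm{\theta}\ge\norm{\ka}=\ka$ by monotonicity of $\norm{\cdot}$ on the ordinals. Hence, given a cofinal $f\colon x\to\la^+$ with $x\in V_\ka$, its range is a cofinal subset with $\norm{f``x}\le\norm{x}<\ka$, so $\ot(f``x)<\ka$ and $\cf(\la^+)<\ka$; conversely any short cofinal sequence is a cofinal map from an ordinal below $\ka$, i.e.\ from a member of $V_\ka$. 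Thus it suffices to prove that $\cf(\la^+)\ge\ka$.

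Suppose toward a contradiction that $\cf(\la^+)=\nu<\ka$, and fix an increasing cofinal map $c\colon\nu\to\la^+$. Choosing a large $\alpha$ with $c,\la,\la^+\in V_\alpha$ and applying the norm-reformulation of weak L\"owenheim--Skolemness to $A=(\nu+1)\cup\{c,\la,\la^+\}$, which has $\norm{A}<\ka$ since $\nu<\ka$, I obtain $X\prec V_\alpha$ with $A\subseteq X$ and $\norm{X}<\ka$; fix a surjection $\sigma\colon V_{\norm{X}}\to X$ and let $\pi\colon X\to\overline{X}$ be the transitive collapse, so $\overline{X}\in V_\ka$. Since $\nu\subseteq X$ and $c\in X$ we have $c``\nu\subseteq X$, so $X\cap\la^+$ is cofinal in $\la^+$; in particular $\pi(\la^+)=\ot(X\cap\la^+)$ is an ordinal of $\overline{X}$, hence $<\ka$. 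Moreover, for every $\delta\in X\cap\la^+$ we have $V_\alpha\models$``there is an injection of $\delta$ into $\la$'' (as $\delta<\la^+$), so by elementarity such an injection lies in $X$.

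The goal is now to run the standard $\ZF$ argument for the regularity of a successor: if one can select, simultaneously for $\xi<\nu$, injections $j_\xi\colon c(\xi)\hookrightarrow\la$, then $\eta\mapsto\seq{\xi(\eta),j_{\xi(\eta)}(\eta)}$, with $\xi(\eta)$ least such that $\eta<c(\xi(\eta))$, embeds $\la^+$ into $\nu\times\la$, and composing with the G\"odel pairing $\nu\times\la\hookrightarrow\la\times\la\to\la$ --- which is choice-free because $\la$ is an aleph --- yields an injection of $\la^+$ into $\la$, contradicting that $\la^+$ is a cardinal above $\la$. The one non-routine ingredient is exactly this simultaneous selection: the injections $j_\xi$ are only guaranteed to \emph{exist} in $X$, and assembling them into a single function is a choice over the $\nu$-indexed family $\{\,j\in X\mid j\colon c(\xi)\hookrightarrow\la\,\}$ $(\xi<\nu)$.

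This selection step is where the surjection $\sigma\colon V_{\norm{X}}\to X$ --- and, if needed, the transitive collapse $\overline{X}\in V_\ka$ together with $\norm{\ka}=\ka$ --- must be brought in: it pushes the entire family back onto $\nu$ many nonempty subsets of the \emph{single} norm-small set $V_{\norm{X}}$, reducing the task to extracting a uniform family of injections from this small amount of data rather than from the unbounded $\la^+$. I expect this uniformization to be the main obstacle and the genuine heart of the argument, with the preceding reflection and the closing cardinal-arithmetic computation being routine; carrying it out carefully is precisely what makes weak L\"owenheim--Skolemness, rather than mere reflection, indispensable here.
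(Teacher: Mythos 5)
Your first two paragraphs are sound: the equivalence of the theorem with $\cf(\la^+)\ge\ka$ is correct (and uses $\norm{\ka}=\ka$ exactly as you say), and the reflection setup producing $X\prec V_\alpha$ with $c``\nu\subseteq X$, $\norm{X}<\ka$, and transitive collapse $\overline{X}\in V_\ka$ is indeed how the argument must begin. The problem is that the sketch stops at its real crux and, worse, points at a resolution that cannot work. Selecting, simultaneously for $\xi<\nu$, one injection $j_\xi\colon c(\xi)\to\la$ is literally an instance of $\AC$: a choice function for a $\nu$-indexed family of nonempty sets. Replacing each set $B_\xi=\{j\in X\mid j\colon c(\xi)\to\la$ injective$\}$ by its pointwise image under the collapse, i.e.\ by a family of nonempty subsets of the single small set $\overline{X}\in V_\ka$, does not reduce the amount of choice required: $\ZF$ does not prove choice even for countable families of nonempty sets of reals, and the weak LS property is a reflection principle that yields no choice below $\ka$ at all. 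So ``extracting a uniform family of injections from this small amount of data'' is not an obstacle to be overcome by care; it has to be avoided entirely. (Note also that this paper does not prove the statement; it is imported from \cite{U}, so there is no in-paper proof to match, but the proof there makes no such selection.)

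The device that replaces uniformization is to use \emph{all} injections available in $X$ at once, and then to recover injectivity by minimizing along the well-ordered coordinate. Concretely: since for every $\eta\in X\cap[\la,\la^+)$ some bijection $g\colon\la\to\eta$ lies in $X$, the single explicit map $G(\bar a,\bar h,\zeta)=\pi^{-1}(\bar h)(\zeta)$ (defined when $\pi^{-1}(\bar a)\in[\la,\la^+)$ and $\pi^{-1}(\bar h)$ is a bijection from $\la$ onto $\pi^{-1}(\bar a)$, and $=0$ otherwise) is a surjection from $\overline{X}\times\overline{X}\times\la$ onto $[\la,\la^+)$; no choice is involved, because for onto-ness one only needs, for each $\beta$, the existence of some $\eta\in X$ above $\beta$ and some bijection in $X$. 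Hence there are $\rho<\ka$ and a surjection $G'\colon V_\rho\times\la\to\la^+$. Now set, for $\beta<\la^+$, $\xi_\beta=\min\{\zeta<\la\mid\exists a\, G'(a,\zeta)=\beta\}$ and $A_\beta=\{a\in V_\rho\mid G'(a,\xi_\beta)=\beta\}$; then $\beta\mapsto(\xi_\beta,A_\beta)$ is injective, so $\la^+$ splits into $\la$ many fibers $S_\xi=\{\beta\mid\xi_\beta=\xi\}$, each of which injects into $\p(V_\rho)\subseteq V_{\rho+1}$. Any ordinal that injects into $V_{\rho+1}$ is a surjective image of $V_{\rho+1}$, so its norm is at most $\rho+1<\ka$, and therefore it is $<\ka$ by $\norm{\ka}=\ka$; thus $\ot(S_\xi)<\ka$ for every $\xi$, and $\beta\mapsto(\xi_\beta,\ot(S_{\xi_\beta}\cap\beta))$ injects $\la^+$ into $\la\times\ka$, hence via G\"odel pairing into $\la$, contradicting that $\la^+$ is the Hartogs number of $\la$. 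This ``use them all, then minimize'' step is precisely what makes weak LS suffice in $\ZF$, and it is the idea missing from your proposal.
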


\begin{lemma}\label{3.22}
Let $\alpha$ be a limit ordinal $\alpha$ and a set $x$,
if $\cf(\alpha)>\norm{x}$ and there is a weakly LS cardinal $\ka$
with $\cf(\alpha) \ge \ka \ge \norm{x}$,
then 
there is no cofinal map from $x$ into $\alpha$.
\end{lemma}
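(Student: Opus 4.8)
The plan is to argue by contradiction, reducing everything to the regular cardinal $\mu=\cf(\alpha)$. Suppose $f$ is a cofinal map from $x$ into $\alpha$, and put $\delta=\norm{x}$. Fixing a surjection $g:V_\delta \to x$ witnessing $\norm{x}=\delta$, the set $R=f``x$ is cofinal in $\alpha$ and $f\circ g:V_\delta \to R$ is onto. Since $R$ is cofinal in $\alpha$ we have $\ot(R)\ge \cf(\alpha)=\mu$, so composing $f\circ g$ first with the order isomorphism $R \to \ot(R)$ and then with the canonical surjection from $\ot(R)$ onto its initial segment $\mu$ yields a surjection from $V_\delta$ onto $\mu$. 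As $\norm{V_\delta}=\delta$, this gives $\norm{\mu}\le \delta$.

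Next I would pin down the norms. Because $\kappa \le \mu$ we have $\kappa=\norm{\kappa}\le \norm{\mu}$, where $\norm{\kappa}=\kappa$ is the defining feature of a weakly LS cardinal. Combined with $\norm{\mu}\le \delta=\norm{x}\le \kappa$, this forces $\norm{\mu}=\delta=\kappa$. In particular there is a surjection $s:V_\kappa \to \mu$, and, since $\mu=\cf(\alpha)>\norm{x}=\kappa$, we have $\mu>\kappa$.

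The contradiction then comes from the regularity of $\mu$. For each $\gamma<\kappa$ the image $s``V_\gamma$ must be bounded in $\mu$: otherwise $s``V_\gamma$ would be a cofinal subset of the regular cardinal $\mu$, hence of order type exactly $\mu$, and composing $s\restriction V_\gamma$ with the resulting order isomorphism would give a surjection from $V_\gamma$ onto $\mu$, so $\norm{\mu}\le \gamma<\kappa$, contradicting $\norm{\mu}=\kappa$. Since $\kappa$ is a cardinal, hence a limit ordinal, and $s$ is onto, the map $\gamma \mapsto \sup(s``V_\gamma)$ is a cofinal map from $\kappa$ into $\mu$. As $\kappa<\mu$ and $\mu$ is regular, this is impossible, and the proof is complete.

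The main point to handle with care is that the entire argument must be carried out in $\ZF$: I must check that each passage to an order type, to an image, and to a supremum is genuinely choice-free, and that the derived maps into $\mu$ really are cofinal rather than merely bounded. The essential structural input is the regularity of $\cf(\alpha)$ together with $\norm{\kappa}=\kappa$, the latter being exactly what prevents $\norm{\mu}$ from dropping below $\kappa$; Theorem \ref{fact3.7} could be invoked to dispatch the special case in which $\cf(\alpha)$ is a successor cardinal, but the uniform argument above does not require it.
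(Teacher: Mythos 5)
Your proof is correct, but it follows a genuinely different and more elementary route than the paper's. The paper argues by cases according to the position of $\norm{x}$ relative to $\ka$ and $\cf(\alpha)$: when $\norm{x}<\ka<\cf(\alpha)$ it invokes Theorem \ref{fact3.7} (no cofinal map from a member of $V_\ka$ into a successor cardinal $\ge\ka^+$) directly for successor $\cf(\alpha)$, and via a club/stationarity argument on points of cofinality $\cf(\ka^+)$ for limit $\cf(\alpha)$; when $\norm{x}=\ka$ it takes suprema over the $V_\gamma$'s; and when $\ka=\cf(\alpha)$ it uses the weak LS property itself, building an elementary submodel $X \prec V_{\alpha+\om}$ whose intersection with $\ka$ would be forced to be cofinal, contradicting that its transitive collapse lies in $V_\ka$. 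Your argument is uniform: the putative cofinal map is converted, via the order isomorphism of $f``x$ with its order type, into a surjection of $V_{\norm{x}}$ onto $\mu=\cf(\alpha)$, which combined with $\norm{\ka}=\ka$ and monotonicity of $\norm{\cdot}$ pins down $\norm{\mu}=\norm{x}=\ka<\mu$; then boundedness of each $s``V_\gamma$ (forced by the minimality of $\norm{\mu}$) turns $\gamma \mapsto \sup(s``V_\gamma)$ into a cofinal map from $\ka$ into the regular cardinal $\mu$, a contradiction. Each step (images, order types, suprema, regularity of $\cf(\alpha)$) is indeed available in $\ZF$, as you note. What your approach buys: it is self-contained (no appeal to Theorem \ref{fact3.7} or to elementary submodels), and it uses the weak LS property of $\ka$ only through the consequence $\norm{\ka}=\ka$, so it actually proves the lemma under that formally weaker hypothesis. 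What the paper's approach buys: it stays within the toolkit already developed there, reusing results and submodel constructions that are needed elsewhere in the paper anyway.
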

\begin{proof}
First suppose $\norm{x}<\ka<\cf(\alpha)$.
In this case we may assume $x \in V_\ka$.
If $\cf(\alpha)$ is a successor cardinal,
then we have done by Theorem \ref{fact3.7}.
Suppose $\cf(\alpha)$ is a limit cardinal.
Then $\cf(\alpha)>\ka^+$, so $\{\eta<\alpha \mid \cf(\eta)=\cf(\ka^+)\}$ is stationary in $\alpha$.
If $f:x \to \alpha$ is a cofinal map,
then $C=\{\eta<\alpha \mid \eta \cap f``x$ is cofinal in $\eta\}$ is a club in $\alpha$.
Hence we can find $\eta<\alpha$ such that $\eta \cap f``x$ is cofinal in $\eta$
and $\cf(\eta)=\cf(\ka^+)$.
However then there is a cofinal map from $x$ into $\ka^+$,
This is a contradiction.

Next suppose $\norm{x}=\ka<\cf(\alpha)$.
Then we may assume $x=V_\ka$.
Take $f:V_\ka \to \alpha$.
By the previous case, for every $\gamma<\ka$,
we have $\sup(f``V_\gamma)<\alpha$.
Since $\cf(\alpha)>\ka$, we have $\sup(f``V_\ka)=\sup_{\gamma<\ka} \sup(f``V_\gamma)<\alpha$.

Finally suppose $\norm{x}<\ka =\cf(\alpha)$.
Then $\ka$ is a regular weakly LS cardinal.
If there is a cofinal map from $x$ into $\alpha$,
 we can find a cofinal map $f$ from $x$ into $\ka$.
Take an elementary submodel $X \prec V_{\alpha+\om}$
such that $V_{\norm{x}} \subseteq X$, $X$ contains all relevant objects,
and the transitive collapse of $X$ is in $V_\ka$.
We know $f``x \subseteq X$, hence $\sup(X \cap \ka)=\ka$.
Then $\ot(X \cap \ka)=\ka$ because $\ka$ is regular,
so the transitive collapse of $X$ cannot be in $V_\ka$.
This is a contradiction.
\end{proof}

\begin{lemma}\label{3.8}
Let $\ka$ be a singular LS cardinal and $\nu \le \cf(\ka)$  a weakly LS cardinal.
Then for every $\alpha>\ka$ and $x \in V_\alpha$,
there is $\beta>\alpha$ and $X \prec V_\beta$
such that $V_\ka \subseteq X$, $x \in X$, 
$X \cap \ka^+ \in \ka^+$, ${}^{V_\gamma} (X \cap V_\alpha)\subseteq X$ for every $\gamma<\nu$,
and $\norm{X}=\ka$.
\end{lemma}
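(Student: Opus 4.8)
The plan is to reproduce the size and successor-cardinal features of Lemma \ref{1} while grafting on the closure coming from the LS property, and the natural vehicle for this is an increasing tower of \emph{closed} elementary submodels of length $\cf(\ka)$ rather than a single directed union. Fix $\beta>\alpha$ large and an increasing sequence $\seq{\ka_i : i<\cf(\ka)}$ cofinal in $\ka$. I would build an increasing sequence $\seq{X_i : i<\cf(\ka)}$ of elementary submodels $X_i\prec V_\beta$ so that $x\in X_i$, $V_{\ka_i}\subseteq X_i$, $\norm{X_i}<\ka$, and, crucially, $X_i$ has the LS-closure with parameter $\nu$: every $y\subseteq X_i\cap V_\alpha$ with $\norm{y}\le\nu$ belongs to $X_i$ (equivalently ${}^{V_\ga}(X_i\cap V_\alpha)\subseteq X_i$ for all $\ga<\nu$). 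At successors I apply the LS property of $\ka$ to $A_i=\bigcup_{j<i}X_j\cup V_{\ka_i}\cup\{x\}$, which has $\norm{A_i}<\ka$ because $i<\cf(\ka)$; at limits I re-close the union by a further LS application, so nestedness and the per-stage closure are both maintained. Set $X=\bigcup_{i<\cf(\ka)}X_i$.

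The Lemma \ref{1} properties then fall out, using that an LS cardinal is in particular weakly LS. Since $V_\ka=\bigcup_i V_{\ka_i}\subseteq X$ we get $\norm{X}\ge\ka$, and stringing together the collapsing maps of the $X_i$ gives a surjection from $V_\ka$ onto $X$, so $\norm{X}=\ka$. By Theorem \ref{1.2} there is no cofinal map from $V_\ka$ into $\ka^+$, whence $\sup(X\cap\ka^+)<\ka^+$; as $\ka\subseteq X$ this yields $X\cap\ka^+=\sup(X\cap\ka^+)\in\ka^+$, exactly as at the end of Lemma \ref{1}.

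The heart of the argument is the closure. Fix $\ga<\nu$ and $f:V_\ga\to X\cap V_\alpha$; I want $f\in X$. Since $\norm{f``V_\ga}\le\ga<\nu\le\cf(\ka)$ and $\nu$ is a weakly LS cardinal with $\cf(\ka)\ge\nu\ge\norm{f``V_\ga}$, Lemma \ref{3.22} says there is no cofinal map from $f``V_\ga$ into the regular cardinal $\cf(\ka)$. Hence the map $c\mapsto\min\{i<\cf(\ka): c\in X_i\}$ on $f``V_\ga$ (well defined because the tower is increasing) is bounded, so $f``V_\ga\subseteq X_{i^*}$ for a single $i^*<\cf(\ka)$; enlarging $i^*$ so that $\ka_{i^*}>\ga$ also gives $V_\ga\subseteq X_{i^*}$. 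Then $f\subseteq X_{i^*}\cap V_\alpha$ with $\norm{f}\le\ga<\nu$, and the LS-closure of $X_{i^*}$ delivers $f\in X_{i^*}\subseteq X$. This is precisely where the hypothesis $\nu\le\cf(\ka)$ is used: a $V_\ga$-sequence of ``length'' below $\nu$ cannot be cofinal in a tower of length $\cf(\ka)$.

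The main obstacle is the construction of the tower within $\ZF$: the successor steps require $\cf(\ka)$ many dependent choices, and one must simultaneously keep the transitive collapse of the union inside $V_\ka$ so that the surjection $V_\ka\to X$ used above is available. I would address both at once by localizing, as in Lemma \ref{1}: first take $Y\prec V_{\beta^*}$, for $\beta^*$ much larger than $\beta$, with $\cf(\ka)\subseteq Y$ and transitive collapse in $V_\ka$, carry out the recursion through the instances of the LS property reflecting into $Y$, and arrange that the whole sequence $\seq{X_i : i<\cf(\ka)}$ together with its witnessing collapsing maps is captured by $Y$; the collapse of $X$ is then controlled just as the collapse of the directed union is in Lemma \ref{1}. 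Verifying that this recursion genuinely goes through in $\ZF$ (i.e.\ that the choices can be made and absorbed by $Y$) is the delicate point. By contrast, sacrificing continuity of the tower at limits of cofinality below $\nu$ costs nothing: the closure of $X$ was derived solely from the single-stage confinement above and never from continuity, so re-closing at every stage is harmless.
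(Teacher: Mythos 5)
Your tower-plus-confinement architecture is genuinely different from the paper's, and parts of it are sound: the confinement step (applying Lemma \ref{3.22} to the map $c \mapsto \min\{i<\cf(\ka) : c \in X_i\}$ to trap $f``V_\gamma$ inside a single $X_{i^*}$, then invoking the per-stage closure of $X_{i^*}$) is correct, as are the norm computation via the canonical collapsing maps and the argument that $X \cap \ka^+ \in \ka^+$. The fatal problem is the one you yourself call ``the delicate point'': the tower cannot be built in $\ZF$. Each successor stage requires \emph{selecting} one witness $X_{i+1}$ to the LS property of $\ka$ applied to $A_{i+1}$ from a collection of witnesses that carries no canonical ordering, and you must do this $\cf(\ka)$ many times in a dependent fashion. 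That is a transfinite recursion with arbitrary choices, i.e.\ a $\DC_{\cf(\ka)}$-style principle, which is exactly what is unavailable here (indeed, the point of this section of the paper is to \emph{force} such principles). Your proposed repair does not close the gap: reflecting the recursion into a model $Y \prec V_{\beta^*}$ whose collapse lies in $V_\ka$ does not make the choices canonical, since neither $Y$ nor its transitive collapse need be well-orderable in $\ZF$, so there is still no definable selection of $X_{i+1}$ among the candidates inside $Y$; and the choice-free alternative of taking the union of \emph{all} candidates in $Y$ at each stage fails because that union already has norm $\ka$ (it absorbs $V_\delta$ for every $\delta \in Y \cap \ka$, and $Y \cap \ka$ is cofinal in $\ka$), after which the LS property can no longer be applied to continue the recursion.

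The paper's proof is structured precisely to avoid any such recursion. It makes a \emph{single} application of the LS property of $\ka$ (with $\gamma=\nu$) to get one model $Y \prec V_\beta$ satisfying ${}^{V_\nu}(Y \cap V_{\alpha+\om+\om}) \subseteq Y$, $\cf(\ka) \subseteq Y$, with collapse in $V_\ka$, and then sets $X = \bigcup(\calF \cap Y)$, where $\calF$ is the \emph{definable} set of elementary submodels of $V_{\alpha+\om}$ containing $x$ with collapse in $V_\ka$ --- no choices at all, and the members of $\calF$ need not themselves be closed. The closure of $X$ is then extracted per given $f:V_\gamma \to X \cap V_\alpha$: weak LS-ness of $\nu$ yields $Y'$ with $f \in Y'$, $V_\gamma \subseteq Y'$ and collapse in $V_\nu$; the set $\calF' = \calF \cap Y \cap Y'$ is an \emph{element} of $Y$ thanks to $Y$'s closure under $V_\nu$-sequences; Lemma \ref{3.22} bounds $\sup\{\norm{Z} : Z \in \calF'\}$ below $\ka$; and then, inside $Y$, one finds a single $Z' \in \calF \cap Y$ with $\p(V_\alpha \cap \bigcup\calF') \subseteq Z'$, which absorbs $f$ (coded by its graph), giving $f \in Z' \subseteq X$. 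So your instinct to localize everything in one $Y$ is the right one, but the localization must \emph{replace} the recursion rather than merely house it: the per-stage closure you labor to maintain is never needed, and dropping it is what makes the construction choice-free.
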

\begin{proof}
Take a large $\beta$ and $Y \prec V_{\beta}$
such that $\alpha, x,\dotsc \in Y$, ${}^{V_\nu} (Y \cap V_{\alpha+\om+\om}) \subseteq Y$,
$\cf(\ka) \subseteq Y$, and the transitive collapse of $Y$ is in $V_\ka$.
Let $\calF=\{Z \prec V_{\alpha+\om} \mid x \in Z$, the transitive collapse of $Z$ is in $V_\ka\}$.
Let $X=\bigcup(\calF \cap Y)$. 
As in the proof of Lemma \ref{1}, we have $X \prec V_{\alpha+\om}$,
$x \in X$, $V_\ka \subseteq X$, $X \cap \ka^+ \in \ka^+$, and $\norm{X}=\ka$.

We have to see that ${}^{V_\gamma} (X \cap V_\alpha) \subseteq X$ for every $\gamma<\nu$.
Take $f:V_\gamma \to X \cap V_\alpha$.
Since $\nu$ is weakly LS, we can find 
$Y' \prec V_{\beta}$ such that $Y$ contains all relevant objects,
${V_\gamma} \subseteq Y'$, and the transitive collapse of $Y'$ is in $V_\nu$.
We have $f \subseteq Y'$.
Let $\calF'=\calF \cap Y \cap Y' \subseteq Y \cap V_{\alpha+\om+\om}$. 
Since ${}^{V_\nu} (Y \cap V_{\alpha+\om+\om}) \subseteq Y$ and $\norm{\calF'} \le \norm{Y}<\nu$,
we have $\calF' \in Y$.
Note that $\norm{Z}<\ka$ for every $Z \in \calF'$.
Since $\norm{\calF'}<\nu \le \cf(\ka)$,
there is no cofinal map from $\calF'$ into $\ka$ by Lemma \ref{3.22}.
Thus we have $\delta=\sup\{\norm{Z} \mid Z \in \calF'\}<\ka$.
Then, as in the proof of Lemma \ref{1}, 
we can define a surjection from $V_\delta$ onto $\bigcup \calF'$.
In $Y$, we can find $Z' \in \calF \cap Y$ such that
$\p(V_\alpha \cap \bigcup \calF') \subseteq Z'$.
Since $f \in Y'$, for each $a \in V_\gamma$
there is $Z \in (\calF \cap Y) \cap Y'$ with $f(a) \in Z$.
Hence $f``V_\gamma \subseteq V_\alpha \cap \bigcup \calF'$, and
$f``V_\gamma \in Z' \subseteq X$.
\end{proof}
\begin{note}
In Lemma \ref{3.8},
if $\nu<\cf(\ka)$ is a singular weakly LS cardinal,
then we can require that ${}^{V_\nu} (X \cap V_\alpha) \subseteq X$.
\end{note}

If $\ka$ is regular LS cardinal, we have the following parallel result:
\begin{lemma}\label{3.8}
Let $\ka$ be an LS cardinal and $\nu <\ka$ a weakly LS cardinal.
Then for every $\alpha>\ka$, $x \in V_\alpha$, and $\gamma<\ka$,
there is $\beta \ge \alpha$ and $X \prec V_\beta$
such that $x \in X$, $\gamma<X \cap \ka \in \ka$,
$V_{X \cap \ka} \subseteq X$, ${}^{V_\delta} (X \cap V_\alpha)\subseteq X$ for every $\delta<\nu$,
and the transitive collapse of $X$ is in $V_\ka$.
\end{lemma}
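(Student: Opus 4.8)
The plan is to graft the regularity argument of Lemma~\ref{reg} onto the outer model used in the previous lemma: I build one elementary submodel $Y$ that simultaneously carries the $V_\nu$-closure needed for the sequence-closure clause and has its transitive collapse in $V_\ka$, and then I run the union construction of Lemma~\ref{reg}. The two ingredients interact almost not at all, because the one genuinely new feature here, the regularity of $\ka$, is used only to pin $X\cap\ka$ strictly below $\ka$.

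First I would fix a large $\theta>\alpha$ and, using that $\ka$ is LS exactly as in the previous lemma, choose $Y\prec V_\theta$ containing $\alpha,x,\ka,\nu,\gamma$ and all other relevant objects, with ${}^{V_\nu}(Y\cap V_{\alpha+\om+\om})\subseteq Y$ and with transitive collapse in $V_\ka$; the $V_\nu$-closure comes from the closure clause in the definition of an LS cardinal applied with some $\gamma'$ in the interval $(\nu,\ka)$. Since $\ka$ is regular and the collapse of $Y$ lies in $V_\ka$, the argument of Lemma~\ref{reg} gives $\sup(Y\cap\ka)<\ka$: otherwise $\ot(Y\cap\ka)=\ka$ and the collapse could not be an element of $V_\ka$. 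As $\gamma,\ka\in Y$, we also obtain $\gamma<\sup(Y\cap\ka)$.

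Next I set $\calF=\{Z\prec V_{\alpha+\om}\mid x\in Z \text{ and the transitive collapse of } Z \text{ lies in } V_\ka\}$ and $X=\bigcup(\calF\cap Y)$, so that $\beta=\alpha+\om$ witnesses the conclusion. Copying Lemma~\ref{reg}: $X\prec V_{\alpha+\om}$ and $x\in X$; for each $\delta'\in Y\cap\ka$ the weak LS-ness of $\ka$ yields (inside $Y$) some $Z\in\calF\cap Y$ with $V_{\delta'}\subseteq Z$, whence $X\cap\ka=\sup(Y\cap\ka)\in\ka$ and $V_{X\cap\ka}\subseteq X$; and fixing $\delta<\ka$ with the collapse of $Y$ in $V_\delta$ produces a single surjection from $V_\delta$ onto $X$, so that $\norm{X}<\ka$ and the collapse of $X$ lies in $V_\ka$. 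Together with the previous paragraph this gives every clause of the conclusion except the sequence-closure.

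The remaining and only delicate point is ${}^{V_\delta}(X\cap V_\alpha)\subseteq X$ for $\delta<\nu$, and here I would reproduce the closure argument of the previous lemma verbatim. Given $f:V_\delta\to X\cap V_\alpha$, the weak LS-ness of $\nu$ supplies $Y'\prec V_\theta$ with $V_\delta\subseteq Y'$, $f\in Y'$, and collapse in $V_\nu$, so $f\subseteq Y'$. Putting $\calF'=\calF\cap Y\cap Y'$, the bound $\norm{\calF'}\le\norm{Y'}<\nu$ together with ${}^{V_\nu}(Y\cap V_{\alpha+\om+\om})\subseteq Y$ gives $\calF'\in Y$. Since $\norm{\calF'}<\nu\le\ka=\cf(\ka)$ and $\nu$ is weakly LS, Lemma~\ref{3.22} rules out a cofinal map from $\calF'$ into $\ka$, so $\delta^{\ast}=\sup\{\norm{Z}\mid Z\in\calF'\}<\ka$; this yields a surjection from $V_{\delta^{\ast}}$ onto $\bigcup\calF'$, and inside $Y$ one then finds $Z'\in\calF\cap Y$ absorbing the power set over $V_\alpha\cap\bigcup\calF'$. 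As each value $f(a)$ lies in some member of $(\calF\cap Y)\cap Y'\subseteq\calF'$, we get $f\subseteq V_\delta\times(V_\alpha\cap\bigcup\calF')$ and hence $f\in Z'\subseteq X$ by the same absorption step. The main thing to watch is that this places the \emph{function} $f$, not merely its range, into $X$, which is exactly why $Z'$ is chosen to absorb a power set; the regularity of $\ka$ causes no trouble here, since $\cf(\ka)=\ka\ge\nu$ makes Lemma~\ref{3.22} applicable just as in the singular case.
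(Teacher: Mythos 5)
Your proof is correct and is precisely the argument the paper intends: the paper states this lemma with no proof at all (only the remark that it is a ``parallel result''), and your splice --- the union construction of Lemma \ref{reg}, with regularity of $\ka$ (which you rightly read into the hypotheses from the paper's framing sentence) pinning $X\cap\ka=\sup(Y\cap\ka)$ below $\ka$, combined with the $\calF'$/Lemma \ref{3.22} closure argument from the singular case --- is exactly the intended adaptation. The one step to tighten is the final absorption: $\p(V_\alpha\cap\bigcup\calF')\subseteq Z'$ does not by itself yield $f\in Z'$, because $f\subseteq V_\delta\times(V_\alpha\cap\bigcup\calF')$ and this product is not a subset of $V_\alpha\cap\bigcup\calF'$ (members of $\calF'$ need not contain $V_\delta$); so instead choose $Z'\in\calF\cap Y$ with $\p(V_\nu\times(V_\alpha\cap\bigcup\calF'))\subseteq Z'$ --- this set still has norm $<\ka$ and is definable in $Y$ from $\nu,\alpha,\calF'$, so the same existence argument applies --- which is exactly the repair your closing sentence gestures at, and which is equally needed in the paper's own proof of the singular case (as written there, the argument only places $f``V_\gamma$, not $f$ itself, into $X$).
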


For an ordinal $\gamma$,
let $\mathsf{DC}_{\gamma}$ be the assertion 
that for every non-empty set $S$ and
$G:{}^{<\gamma} S \to \p(S) \setminus \{\emptyset\}$,
there is $f:\gamma \to S$ such that
$f(\alpha) \in G(f \restriction \alpha)$ for every $\alpha<\gamma$.
Let $\mathsf{DC}_{<\gamma}$ be the assertion that
$\mathsf{DC}_\delta$ holds for every $\delta<\gamma$.

$\mathsf{DC}_\om$ is the Dependent Choice, and 
it is known that $\AC$ is equivalent 
to that $\mathsf{DC}_\gamma$ for every $\gamma$.
Note also that the following:
\begin{enumerate}
\item $\mathsf{DC}_\gamma \Rightarrow \mathsf{DC}_{\gamma+1}$.
\item If $\gamma$ is a singular ordinal and $\mathsf{DC}_{<\gamma}$ holds,
then $\mathsf{DC}_\gamma$ holds as well.
\item $\mathsf{DC}_\gamma \Rightarrow \mathsf{DC}_{<\ga^+}$.
\end{enumerate}

Woodin \cite{W} proved that collapsing a supercompact cardinal
yields $\mathsf{DC}_{\ka}$ for some large $\ka$.
Where $\ka$ is \emph{supercompact}
if for every $\alpha > \ka$,
there is $\beta \ge \alpha$, a transitive set $N$ with ${}^{V_\alpha} N \subseteq N$,
and an elementary embedding $j:V_\beta \to N$
with critical point $\ka$ and $\alpha<j(\ka)$.
It is known that a supercompact cardinal is a limit of LS cardinals
(\cite{U}).

We prove a similar result using LS cardinals.

\begin{define}
Let $\ka$ be a regular cardinal
and $S$ a non-empty set.
Let $\col(\ka, S)$ be the 
poset of all partial functions $p$ from $\ka$ to $S$ with
$\size{p}<\ka$ (so $p$ is assumed to be well-orderable).
The ordering of $\col(\ka, S)$ is the reverse inclusion.

If $G$ is $(V, \col(\ka, S))$-generic,
then $\bigcup G$ is a surjection from $\ka$ onto $S$.
\end{define}
\begin{prop}
Let $\ka$ be a regular uncountable cardinal,
and suppose $\mathsf{DC}_{<\ka}$ holds.
Let $\la>\ka$ be a singular LS cardinal such that $\cf(\la)>\ka$ and
there is a weakly LS cardinal $\nu$ with $\ka \le \nu \le \cf(\la)$.
Then $\col(\ka, V_\la)$ does not add new $<\ka$-sequences, and 
forces $(\la^+)^V=(\ka^+)^{V^{\col(\ka, V_\gamma)}}$ and $\mathsf{DC}_{\ka}$.
\end{prop}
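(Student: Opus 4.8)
The plan is to treat the three assertions separately, using the $<\ka$-closure of $P=\col(\ka,V_\la)$ for the first, Theorem~\ref{1.2} for the second, and the elementary-submodel Lemma~\ref{3.8} for the third. I begin by recording two structural facts about $P$. Since $\cf(\la)>\ka$ and $\ka$ is regular, every condition $p$ has size $<\ka$ and domain bounded below $\ka$, so all its elements have rank $<\la$ and hence $p\in V_\la$; thus $P\subseteq V_\la$ and $\norm{P}\le\la$. Moreover $P$ is $<\ka$-closed: a decreasing chain $\seq{p_i:i<\delta}$ with $\delta<\ka$ has union a partial function whose domain is contained in $\sup_i\sup(\dom p_i)<\ka$, and this union is well-orderable of size $<\ka$, hence a condition. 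Using $\mathsf{DC}_{<\ka}$ one then runs the standard argument: given a name $\dot h$ for a $\delta$-sequence ($\delta<\ka$) of ground-model sets and a condition, a $\mathsf{DC}_\delta$-recursion builds a decreasing chain of conditions deciding $\dot h$ pointwise, and a lower bound (which exists by closure) decides all of $\dot h\restriction\delta$. Hence $P$ adds no new $<\ka$-sequences of ground-model elements; in particular $\ka$ remains a regular cardinal and nothing $\le\ka$ is collapsed.

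For the second assertion, $G$ adds a surjection $\ka\to V_\la$, so $|\la|=\ka$ in $V[G]$ and every $V$-cardinal in $(\ka,\la]$ is collapsed to $\ka$. It remains to preserve $(\la^+)^V$. Here I imitate the proof that $\col(V_\ka)$ preserves $\ka^+$: given $p$ and a name $\dot f$ for a map from $\ka$ into $(\la^+)^V$, define $F:P\times\ka\to\la^+$ by setting $F(q,\xi)=\eta$ whenever $q\le p$ forces $\dot f(\xi)=\eta$, and $F(q,\xi)=0$ otherwise. As $\la$ is singular weakly LS, Theorem~\ref{1.2}(1) forbids a cofinal map from $V_\la$ into $\la^+$; since $\norm{P\times\ka}\le\la$, composing with a surjection $V_\la\to P\times\ka$ shows $F$ cannot be cofinal, so $\gamma=\sup\range F<\la^+$ and $p\Vdash\dot f``\ka\subseteq\gamma$. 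Thus no surjection $\ka\to(\la^+)^V$ exists in $V[G]$; as every smaller $V$-cardinal is collapsed to $\ka$ and no ordinal $\le\la$ can surject onto $(\la^+)^V$ there, we conclude $(\la^+)^V=(\ka^+)^{V[G]}$.

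The heart of the matter is $\mathsf{DC}_\ka$. Given names $\dot S,\dot G$ and a condition forcing the hypotheses, I would apply Lemma~\ref{3.8} (the version for the singular LS cardinal $\la$ with the weakly LS cardinal $\nu\le\cf(\la)$) to obtain $X\prec V_\beta$ with $V_\la\subseteq X$, $\dot S,\dot G\in X$, $\norm X=\la$, and ${}^{V_\gamma}(X\cap V_\alpha)\subseteq X$ for all $\gamma<\nu$, where $\alpha$ is chosen large enough to contain all relevant $P$-names. Form $X[G]=\{\dot x_G:\dot x\in X\}$; since $P\subseteq V_\la\subseteq X$ we have $X[G]\prec V_\beta[G]$, and the canonical surjection $V_\la\to X[G]$ together with the generic surjection $\ka\to V_\la$ makes $X[G]$ well-orderable of size $\le\ka$ in $V[G]$. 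The crucial closure lemma is that $X[G]$ is closed under $<\ka$-sequences in $V[G]$: given $s:\delta\to X[G]$ with $\delta<\ka$, a $\mathsf{DC}_\delta$-recursion (as in the first paragraph) yields, densely below some condition in $G$, a single condition deciding $s$ pointwise as $\seq{(\dot t_\xi)_G:\xi<\delta}$ with each $\dot t_\xi\in X\cap V_\alpha$; the sequence $\seq{\dot t_\xi:\xi<\delta}$ then lies in $X$ by the closure property (since $\delta<\ka\le\nu$, after extending the domain to $V_\delta$), whence $s\in X[G]$. With this in hand, fix a well-order $<^*$ of $X[G]$ in $V[G]$ and define $f:\ka\to\dot S_G$ by transfinite recursion, taking $f(\alpha)$ to be the $<^*$-least element of $\dot G_G(f\restriction\alpha)\cap X[G]$; at every stage $f\restriction\alpha\in X[G]$ by the closure lemma and $\dot G_G(f\restriction\alpha)\cap X[G]\neq\emptyset$ by elementarity, so the recursion runs through all of $\ka$ and $f$ witnesses this instance of $\mathsf{DC}_\ka$.

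I expect the closure of $X[G]$ under $<\ka$-sequences to be the main obstacle, since it requires transferring the ground-model closure ${}^{V_\gamma}(X\cap V_\alpha)\subseteq X$ of Lemma~\ref{3.8} through the forcing: one must descend to names, use $\mathsf{DC}_{<\ka}$ together with $<\ka$-closure to pin a given short sequence down by a single condition whose deciding names are drawn from $X$, and only then invoke the $V$-closure to place the name-sequence in $X$. Granting this, the decisive and pleasant point is that $\mathsf{DC}_\ka$ in $V[G]$ comes out of a choice-free transfinite recursion powered solely by the well-ordering of $X[G]$, so the argument never appeals to $\mathsf{DC}_\ka$ in $V$ — only to the hypothesis $\mathsf{DC}_{<\ka}$.
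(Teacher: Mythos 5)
Your proposal is correct and follows essentially the same route as the paper's own proof: the same three-part decomposition, with $<\ka$-closure plus $\mathsf{DC}_{<\ka}$ giving preservation of $<\ka$-sequences, Theorem \ref{1.2} applied to a decision function $F$ on the poset to keep $(\la^+)^V$ a cardinal, and Lemma \ref{3.8} producing a well-orderable $X[G]$ closed under $<\ka$-sequences from which $\mathsf{DC}_\ka$ is extracted by a recursion along a well-ordering of $X[G]$. The only cosmetic difference is in the closure step for $X[G]$: the paper chooses names inside $V[G]$ using the preserved $\mathsf{DC}_{<\ka}$ and then pulls the name-sequence back to $V$ via ``no new $<\ka$-sequences,'' whereas you run the equivalent density/chain argument in $V$; both hinge on the same ground-model closure ${}^{V_\gamma}(X \cap V_{\alpha+\om}) \subseteq X$.
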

\begin{proof}
By $\mathsf{DC}_{<\ka}$,
we know that $\col(\ka, V_\la)$ is $\ka$-closed
and does not add new $<\ka$-sequences.
We can check that $\col(\ka, V_\la)$ preserves $\mathsf{DC}_{<\ka}$ as well.

To see $\col(\ka, V_\la)$ forces $(\la^+)^V=(\ka^+)^{V^{\col(\ka, V_\la)}}$,
it is clear that $(\la^+)^V \le (\ka^+)^{V^{\col(\ka, V_\la)}}$.
For the converse, take $p \in \col(\ka, V_\la)$ and a name $\dot f$ for
a function from $\la$ to $(\la^+)^V$.
Define $F:\la \times \col(\ka, V_\la) \to \la^+$
by $F(\delta, q)=\eta \iff q \le p$ and $q \Vdash$``$\dot f(\delta)=\eta$''.
$F$ is not cofinal by Theorem \ref{1.2},
hence the image of $F$ has an upper bound in $\la^+$, say $\eta$.
It is clear that $p \Vdash$``$\dot f`` \la\subseteq \eta$'',
so $\Vdash$``$(\la^+)^V$ is regular''.

For $\mathsf{DC}_{\ka}$,
take $p \in \col(\ka, V_\la)$ and
names $\dot S$ and $\dot F$ with $p \Vdash$``$\dot F:{}^{<\ka} \dot S \to \p(\dot S)\setminus \{\emptyset\}$''.
Fix a sufficiently large $\alpha>\la^+$.
By Lemma \ref{3.8}, we can find $\beta>\alpha$ and $X \prec V_\beta$
such that $X$ contains all relevant objects, $V_\la \subseteq X$,
$X \cap \la^+ \in \la^+$, ${}^{V_\gamma}(X \cap V_{\alpha+\om}) \subseteq X$ for every $\gamma<\nu$,
and $\norm{X}=\la$.
Take a $(V, \col(\ka, V_\la))$-generic $G$ and work in $V[G]$.
We may assume $X[G] \prec V[G]_\beta$.
We see that $X[G] \cap V[G]_\alpha$ is closed under $<\ka$-sequences in $V[G]$.
Take $\eta<\ka$ and $f:\eta \to X[G] \cap V[G]_\alpha$.
By $\mathsf{DC}_{<\ka}$ in $V[G]$,
we can find a sequence of $\col(\ka,V_\la)$-names $\seq{\dot x_i \mid i<\eta}$
such that $\dot x_i \in X$ and $f(i)=(\dot x_i)_G$ for every $i<\eta$.
We may assume $\dot x_i \in V_{\alpha+\om}$ for $i<\eta$.
Since $\col(\ka, V_\la)$ does not add new $<\ka$-sequences,
we have $\seq{\dot x_i\mid i<\eta} \in V$,
hence $\seq{ \dot x_i \mid i<\eta} \in X$ and
$\{(\dot x_i)_G \mid i<\eta\} \in X[G]$.

Since there is a sujrction from $\ka$ onto $V_\la$, from $V_\la$ onto $X$, and
from $X$ onto $X[G]$, we can obtain a bijection from $\ka$ onto $X[G]$.
Let $S=\dot S_G$ and $F=\dot F_G$.
Since $X[G] \prec V[G]_\beta$, $X[G] \cap V[G]_\alpha$ is closed under $<\ka$-sequences, and
$X[G]$ is well-orderable, 
we can take $f:\ka \to S \cap X[G]$ such that
$f(i) \in F(f \restriction i)$ for every $i<\ka$.
\end{proof}

If there are proper class many LS cardinals,
then for every cardinal $\ka$,
there is a singular LS cardinal $\la>\ka$ 
such that there is an LS cardinal $\nu$ with $\ka<\nu<\cf(\la)$;
Take a singular LS cardinal $\nu>\ka$, and let $\la$ be the $\nu^+$-th LS cardinal above $\nu$.
The cofinality of $\la$ is $\nu^+$.
In addition, in \cite{U} we proved that if $\ka$ is a limit of LS cardinals,
then every poset with rank $<\ka$ forces that $\ka$ is LS.
Using these facts,
we can obtain the following corollary:
\begin{cor}
Suppose there are proper class many LS cardinals.
Then there is a definable class forcing which forces
$\ZFC$.
\end{cor}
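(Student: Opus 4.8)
The plan is to reduce the problem to forcing $\mathsf{DC}_\gamma$ for every ordinal $\gamma$. Since $\AC$ is equivalent to the conjunction of all $\mathsf{DC}_\gamma$, since $\mathsf{DC}_\ka$ implies $\mathsf{DC}_\gamma$ for every $\gamma \le \ka$ (because $\mathsf{DC}_\ka \Rightarrow \mathsf{DC}_{<\ka^+}$), and since $\mathsf{DC}_\gamma$ is automatic at singular $\gamma$ from $\mathsf{DC}_{<\gamma}$, it is enough to produce a definable class forcing that forces $\mathsf{DC}_\ka$ for cofinally many regular $\ka$. I would build a definable class iteration $\seq{\bbP_\xi, \dot{\bbQ}_\xi}$ together with an increasing continuous class sequence of regular cardinals $\seq{\ka_\xi}$ recording the ``current'' cardinal of the intermediate model $V[G_\xi]$, whose successive factors are collapses that install $\mathsf{DC}$ higher and higher.

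For the base step I would invoke the earlier Proposition on $\col(V_\ka)$: fixing a singular weakly LS cardinal $\ka_0$ and forcing with $\col(V_{\ka_0})$ makes $(\ka_0^+)^V = \om_1$ and forces $\DC = \mathsf{DC}_\om$, hence $\mathsf{DC}_{<\om_1}$; set $\ka_1 = \om_1$. At a successor (or regular-limit) step, assuming $\mathsf{DC}_{<\ka_\xi}$ holds in $V[G_\xi]$ with $\ka_\xi$ regular uncountable, I would apply the main Proposition: choose, as in the paragraph preceding the Corollary, a singular LS cardinal $\nu_\xi > \ka_\xi$ and let $\la_\xi$ be the $\nu_\xi^+$-th LS cardinal above $\nu_\xi$, so that $\cf(\la_\xi) = \nu_\xi^+$ and $\nu_\xi$ is a weakly LS cardinal with $\ka_\xi \le \nu_\xi \le \cf(\la_\xi)$; then the factor $\dot{\bbQ}_\xi = \col(\ka_\xi, V_{\la_\xi})$ adds no new $<\ka_\xi$-sequences, forces $\mathsf{DC}_{\ka_\xi}$, and collapses $(\la_\xi^+)^{V[G_\xi]}$ to become $(\ka_\xi^+)^{V[G_{\xi+1}]}$. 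I set $\ka_{\xi+1} = (\ka_\xi^+)^{V[G_{\xi+1}]}$ and continue; at a singular limit stage $\eta$ one has $\ka_\eta = \sup_{\xi<\eta}\ka_\xi$ and $\mathsf{DC}_{\ka_\eta}$ comes for free from $\mathsf{DC}_{<\ka_\eta}$. I would take inverse (Easton-style, full-support below the current closure point) limits at limit stages, so that the tail of the iteration beyond stage $\xi$ stays $\ka_{\xi+1}$-closed and in particular adds no new $<\ka_\xi$-sequences.

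Two preservation facts drive the bookkeeping. First, the result of \cite{U} that a limit of LS cardinals stays LS after forcing with any poset of rank below it: each initial segment $\bbP_\xi$ is a set of bounded rank, and there are proper class many limits of LS cardinals, so at every stage $V[G_\xi]$ still has proper class many LS cardinals above the current rank, and the data $(\nu_\xi, \la_\xi)$ required by the main Proposition can always be found in $V[G_\xi]$. Second, a preservation principle for $\mathsf{DC}$: if $\mathsf{DC}_\gamma$ holds and a forcing adds no new $<\ka$-sequence with $\gamma < \ka$, then $\mathsf{DC}_\gamma$ survives; this is proved by using $\mathsf{DC}_\gamma$ in the ground model to construct, by a recursion of length $\gamma$, a descending sequence of conditions together with names that decide the required branch, passing limit stages by closure. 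Combining these, $\mathsf{DC}_\gamma$, once forced at a stage $\xi$ with $\ka_\xi > \gamma$, is preserved by the $\ka_{\xi+1}$-closed tail; hence in the final extension $V[G]$ every $\mathsf{DC}_\gamma$ holds, so $\AC$ and therefore $\ZFC$ holds.

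The main obstacle is the choiceless management of the class iteration. One must check that it is genuinely definable, that the Easton/inverse-limit supports simultaneously keep each $\bbP_\xi$ a set and keep the tails sufficiently closed so that no short sequences are ever added and no previously installed instance of $\mathsf{DC}$ is destroyed, and --- most delicately in the absence of $\AC$ --- that $\bbP$ is tame enough that $V[G]$ is again a model of $\ZF$ (preservation of Replacement and Power Set through a proper-class forcing). A secondary subtlety is verifying that the cofinality of $\la_\xi$ and the LS-properties of $\nu_\xi, \la_\xi$ are computed correctly in $V[G_\xi]$ rather than merely in $V$, which again rests on the preservation of LS cardinals under small forcing together with the ``no new short sequence'' closure of the iteration.
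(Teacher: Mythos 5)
Your proposal is correct and is essentially the paper's intended argument: the paper offers no detailed proof, merely citing the same ingredients you assemble --- the Proposition on $\col(\ka, V_\la)$, the existence above any $\ka$ of a singular LS cardinal $\la$ together with a weakly LS $\nu$ with $\ka \le \nu \le \cf(\la)$ (obtained as the $\nu^+$-th LS cardinal above a singular LS cardinal $\nu$), and the preservation of limits of LS cardinals under forcing of smaller rank --- into a definable class-length iteration of collapses forcing $\mathsf{DC}_{\ka_\xi}$ for a cofinal class of $\ka_\xi$, whence $\mathsf{DC}_\gamma$ for all $\gamma$ and thus $\AC$. The technical points you flag at the end (tameness/$\ZF$-preservation of the class iteration, closure of the tails in the absence of $\AC$, and recomputing the LS data and cofinalities in the intermediate extensions) are exactly the details the paper likewise leaves implicit.
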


\section{LS cardinals in $\HOD(V_\la)$}

For a set $X$,
let $\HOD(X)$ be the class of all  hereditarily definable sets 
with parameters from $\mathrm{ON} \cup \mathrm{trcl}(\{X\})$.
$\HOD(X)$ is a transitive model of $\ZF$ with $X \in \HOD(X)$.
By the definition of $\HOD(X)$,
for every $x \in \HOD(X)$
there is an ordinal $\theta$ and a surjection $\sigma:\theta \times {}^{<\om} (\mathrm{trcl}(\{X\}) \to x$
with $\sigma \in \HOD(X)$.
If $X=V_\alpha$ and $\alpha$ is limit, then $V_\alpha$ is transitive, ordinal definable, and 
${}^{<\om} V_\alpha \subseteq V_\alpha$.
Hence the domain of $\sigma$ can be the set $\theta \times  V_\alpha$ in this case.

\begin{lemma}\label{5.1}
Let $\delta$ be a limit ordinal.
Then for every limit ordinals $\alpha>\beta \ge \delta$ and  $x \in V_\alpha \cap \HOD(V_\delta)$,
there is $X \prec V_\alpha \cap \HOD(V_\delta)$
such that $x \in X \in \HOD(V_\delta)$,
$V_\beta \cap \HOD(V_\delta)\subseteq X$,
and $\norm{X}^{\HOD(V_\delta)}=\beta$.
\end{lemma}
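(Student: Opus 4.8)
The plan is to carry out a Löwenheim–Skolem construction entirely inside $W := \HOD(V_\delta)$, which is a model of $\ZF$, using the canonical surjection structure of $W$ to manufacture Skolem functions in the absence of choice. Write $M := V_\alpha \cap \HOD(V_\delta)$, a set of $W$, and note that $V_\delta^W = V_\delta \subseteq W$ and $V_\beta \cap \HOD(V_\delta) = V_\beta^W$. Since $M \in W$ and $\delta$ is a limit ordinal, the structural fact recalled just before the lemma yields an ordinal $\theta$ and a surjection $\Sigma : \theta \times V_\delta \to M$ with $\Sigma \in W$. Because $M$ is a set model, $W$ carries a definable satisfaction predicate for $\langle M, \in\rangle$, so I can define, uniformly in a formula $\varphi$ (coded by $n<\om$) and a tuple $\bar a \in \fs M$, a single Skolem function $H(\bar a, n) :=$ the least $\xi<\theta$ such that $\exists d \in V_\delta$ with $M \models \varphi(\Sigma(\xi,d),\bar a)$ (and $0$ if there is no witness). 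Taking the \emph{least} $\xi$ avoids any choice on the ordinal coordinate; crucially, I never need to choose the witness $d \in V_\delta$, since all of $V_\delta$ will be absorbed into the hull.

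Next I would isolate the ordinals that must appear. For the base set put $Y_{\mathrm{base}} := \{\xi_c : c \in V_\beta^W\}$, where $\xi_c$ is the least $\xi<\theta$ admitting some $d \in V_\delta$ with $\Sigma(\xi,d)=c$; fix also $\xi_x$ least with $\Sigma(\xi_x,d_x)=x$, and set $Y_0 := Y_{\mathrm{base}} \cup \{\xi_x\}$. These are all definable in $W$ without choice (again by taking least ordinals), and $c \mapsto \xi_c$ is a surjection of $V_\beta^W$ onto $Y_{\mathrm{base}}$. I then close under $H$: define $Y_{n+1} := Y_n \cup H``\big(\fs(\Sigma``(Y_n\times V_\delta)) \times \om\big)$, set $Y := \bigcup_{n<\om} Y_n$, and put $X := \Sigma``(Y \times V_\delta)$. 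A Tarski–Vaught check gives $X \prec M$: if $\bar a \in \fs X$ and $M \models \exists y\,\varphi(y,\bar a)$, then $\xi := H(\bar a, \ulcorner\varphi\urcorner) \in Y$ by closure, and the associated witness $\Sigma(\xi,d)$ lies in $X$ because $\xi \in Y$ and $d \in V_\delta$. Clearly $x \in X$ (via $\xi_x$) and $V_\beta^W \subseteq X$ (via $Y_{\mathrm{base}}$), and $X \in W$ because the whole construction is definable in $W$ from $\Sigma, M, V_\beta^W, x$.

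The one quantitative point, and the place to be careful, is the bound $\norm{X}^W \le \beta$. Writing $D := V_\beta^W$, I first record the $\ZF$ surjections available inside $W$: since $\beta$ is limit, Kuratowski pairing gives $D \times D \subseteq D$, so $D$ surjects onto $D \times D$, and $\fs D \subseteq D$; also $\om \subseteq D$ and $V_\delta \subseteq D$, so $D$ surjects onto $\om$ and onto $V_\delta$. Using these I would build, by a recursion carried out inside $W$, a single surjection $\Phi : \om \times D \to Y$ whose $n$-th slice surjects onto $Y_n$: from a surjection $D \twoheadrightarrow Y_n$ one manufactures $D \twoheadrightarrow Y_n \times V_\delta \twoheadrightarrow \Sigma``(Y_n\times V_\delta) \twoheadrightarrow \fs(\Sigma``(Y_n\times V_\delta)) \times \om \twoheadrightarrow Y_{n+1}\setminus Y_n$, and amalgamates to $D \twoheadrightarrow Y_{n+1}$. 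Defining $\Phi$ by an internal recursion, rather than choosing the slices independently, is what keeps the argument choice-free. Then $Y$ is a surjective image of $\om \times D$, hence of $D$, so $\norm{Y}^W \le \norm{D}^W = \beta$; consequently $X = \Sigma``(Y\times V_\delta)$ is a surjective image of $Y \times V_\delta$, hence of $D$, giving $\norm{X}^W \le \beta$. The reverse inequality is immediate, since $V_\beta^W \subseteq X$ forces $\beta = \norm{V_\beta^W}^W \le \norm{X}^W$. Hence $\norm{X}^W = \beta$.

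I expect the main obstacle to be exactly this last bookkeeping in the absence of choice: ensuring that the $\om$-step closure does not push $\norm{X}^W$ past $\beta$. This is what forces one to produce the uniform surjection $\Phi$ by an internal recursion rather than by choosing surjections stage by stage, and to phrase the Skolem functions so that only the well-ordered coordinate $\theta$ ever requires a selection (resolved by "least $\xi$"), while the $V_\delta$-coordinate is obtained for free because $V_\delta \subseteq V_\beta^W \subseteq X$.
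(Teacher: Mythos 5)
Your proposal is correct and takes essentially the same approach as the paper's proof: the least-$\xi$ selection on the well-ordered $\theta$-coordinate combined with absorbing entire slices $\Sigma``(\{\xi\}\times V_\delta)$ into the hull is exactly the paper's device for avoiding choice, and your internally recursive surjection $\Phi$ plays the same role as the paper's canonically constructed surjections $f_n:V_\beta\cap\HOD(V_\delta)\to X_n$ (the paper just carries the sets $X_n$ and maps $f_n$ along simultaneously rather than working with sets of ordinals $Y_n$ and assembling the surjection afterwards). No gaps.
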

\begin{proof}
For given $\alpha>\beta \ge \delta$,
we can take a large ordinal $\theta$ and a surjection $\sigma:\theta \times V_\delta
\to V_\alpha \cap \HOD(V_\delta)$ with $\sigma \in \HOD(V_\delta)$.
By induction on $n<\om$, we define  $X_n$ and $f_n$ as follows.
First, let $X_0=(V_\beta \cap \HOD(V_\delta)) \cup \{x\}$, and take a surjection $f_0:V_\beta
\cap \HOD(V_\delta)
 \to X_0$ with
$f_0 \in \HOD(V_\delta)$.
Suppose $X_n$ and $f_n$ are defined and $f_n$ is a surjection from $V_\beta\cap \HOD(V_\delta)$ onto $X_n$.
For a formula $\varphi(v_1,\dotsc, v_k,w)$ and $x_1,\dotsc, x_k \in X_n$,
if $\exists w \varphi(x_1,\dotsc, x_k,w)$ holds in $V_\alpha \cap \HOD(V_\delta)$
then we can find the least $\alpha_{\varphi, x_1,\dotsc, x_k} \in \theta$
such that $\exists w \in \sigma``(\{\alpha_{\varphi, x_1,\dotsc, x_k}\} \times V_\delta)\, \varphi(x_1,\dotsc, x_k,w)$ holds in $V_\alpha \cap \HOD(V_\delta)$.
Set $X_{n+1}=X_n \cup \bigcup\{ \sigma``(\{\alpha_{\varphi,x_1,\dotsc, x_k}\} \times V_\delta):
\varphi$ is a formula, $x_1,\dotsc, x_k \in X_n\}$.
There is a canonical surjection from $V_\beta\cap \HOD(V_\delta)$ onto $
\bigcup\{ \sigma``(\{\alpha_{\varphi,x_1,\dotsc, x_k}\} \times V_\delta):
\varphi$ is a formula, $x_1,\dotsc, x_k \in X_n\}$, namely
$\seq{\lceil \varphi \rceil, y_1,\dotsc, y_k,z} \mapsto 
\sigma(\alpha_{\varphi,f_n(y_1),\dotsc, f_n(y_k)},z)$
(where $\lceil \varphi \rceil \in \om$ is the G\"odel number of $\varphi$).
Hence we can define $f_{n+1} :V_\beta \cap \HOD(V_\delta)\to X_{n+1} $ canonically using this surjection and $f_n$.
We can carry out this construction in $\HOD(V_\delta)$, hence $\seq{X_n,f_n \mid n<\om}
\in \HOD(V_\delta)$.
Thus we have $X=\bigcup_n X_n \in \HOD(V_\delta)$. We know $V_\beta \cap \HOD(V_\delta)\subseteq X$ and $x \in X$.
By Tarski-Vaught criterion, we have $X \prec V_\alpha \cap \HOD(V_\delta)$.
Moreover, there is a surjection from $\om \times (V_\beta\cap \HOD(V_\delta))$ onto $X$ in $\HOD(V_\delta)$
constructed from $\seq{f_n \mid n<\om}$,
so we can construct a surjection $f:V_\beta \cap \HOD(V_\delta)\to X$ in $\HOD(V_\delta)$,
hence $\norm{X}^{\HOD(V_\delta)}=\beta$.
\end{proof}

The following is immediate from the previous lemma:
\begin{cor}
Let $\delta$ be a limit ordinal.
Then for every cardinal $\ka>\delta$ with $\norm{\ka}=\ka$,
$\ka$ is weakly LS in $\HOD(V_\delta)$.
\end{cor}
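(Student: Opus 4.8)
The plan is to work inside the transitive model $M=\HOD(V_\delta)\models\ZF$ and to verify the definition of weakly LS there, using Lemma \ref{5.1} as the Skolem-hull step. Since rank is absolute for transitive models, the $V_\alpha$ computed in $M$ is exactly $V_\alpha\cap\HOD(V_\delta)$, which is precisely the object appearing in Lemma \ref{5.1}; I will write $V_\alpha^M$ for it and $\norm{\cdot}^M$ for $\norm{\cdot}$ computed in $M$. The target is the original form of the definition, with a single parameter $x$ and the requirement $V_\ga\subseteq X$, because that is exactly what Lemma \ref{5.1} delivers.

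The one genuinely new ingredient is the absoluteness claim $\norm{\ka}^M=\ka$. For $\norm{\ka}^M\le\ka$: every ordinal below $\ka$ lies in $M$ and has rank below $\ka$, so the map on $V_\ka^M$ sending each ordinal $<\ka$ to itself and everything else to $0$ is a surjection onto $\ka$ that is definable, hence lies in $M$. For $\norm{\ka}^M\ge\ka$: any surjection from $V_\beta^M$ onto $\ka$ lying in $M$ also lies in $V$, and extending it by $0$ on $V_\beta\setminus V_\beta^M$ yields a surjection from $V_\beta$ onto $\ka$ in $V$; since $\norm{\ka}^V=\ka$ by hypothesis, this forces $\beta\ge\ka$. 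Thus $\norm{\ka}^M=\ka$, and in particular the basic fact recorded in the introduction — if $\norm{\ka}=\ka$, $X\prec V_\alpha$, and $\norm{X}<\ka$, then the transitive collapse of $X$ lies in $V_\ka$ — is a theorem of $\ZF$ and so holds inside $M$.

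Next I would verify the weakly LS condition in $M$. It suffices to treat limit ordinals $\alpha\ge\ka$: for non-limit $\alpha$ one passes to a limit $\alpha^*>\alpha$, applies the conclusion there with the parameter $\seq{x,V_\alpha^M}$, and intersects the resulting hull $X^*$ with $V_\alpha^M$, using $V_\alpha^M\in X^*$ to get $X^*\cap V_\alpha^M\prec V_\alpha^M$ and $\norm{X^*\cap V_\alpha^M}^M\le\norm{X^*}^M<\ka$ (as $\norm{\cdot}^M$ decreases under $\subseteq$). So fix $\ga<\ka$, a limit $\alpha\ge\ka$, and $x\in V_\alpha^M$. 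Since $\ka$ is an infinite cardinal above $\delta$, it is a limit ordinal with $\ga,\delta<\ka$, so I may choose a limit ordinal $\beta$ with $\max(\ga,\delta)\le\beta<\ka$. Applying Lemma \ref{5.1} to $\alpha>\beta\ge\delta$ and $x$ yields $X\prec V_\alpha^M$ with $x\in X\in M$, $V_\beta^M\subseteq X$, and $\norm{X}^M=\beta$. Because $\ga\le\beta$ we get $V_\ga^M\subseteq V_\beta^M\subseteq X$, and because $\norm{X}^M=\beta<\ka=\norm{\ka}^M$ the transitive collapse of $X$ computed in $M$ lies in $V_\ka^M$ by the fact invoked above. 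This is exactly the weakly LS condition in $\HOD(V_\delta)$.

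The only real obstacle is the absoluteness computation $\norm{\ka}^{\HOD(V_\delta)}=\ka$; once it is in place, Lemma \ref{5.1} supplies the hull directly and everything else is bookkeeping, which is why the statement is recorded as immediate. The point I would be most careful to confirm is that the elementarity $X\prec V_\alpha\cap\HOD(V_\delta)$ provided by Lemma \ref{5.1} is precisely elementarity of $X$ in the $V_\alpha$ of $M$, so that it matches the ``$X\prec V_\alpha$'' demanded by the definition of weakly LS interpreted inside $\HOD(V_\delta)$.
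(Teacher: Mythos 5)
Your proof is correct and takes essentially the same route as the paper: both reduce to limit $\alpha$ and to a limit $\beta$ with $\gamma,\delta \le \beta < \ka$, apply Lemma \ref{5.1} to get a hull $X \in \HOD(V_\delta)$ with $X \prec V_\alpha \cap \HOD(V_\delta)$, $V_\beta \cap \HOD(V_\delta) \subseteq X$, and $\norm{X}^{\HOD(V_\delta)}=\beta$, and then conclude that the transitive collapse of $X$ lies in $V_\ka \cap \HOD(V_\delta)$. The only difference is bookkeeping: the paper transfers the norm bound down to $V$ (using $\norm{X} \le \norm{X}^{\HOD(V_\delta)} = \beta < \ka = \norm{\ka}$ in $V$), which lets it skip your absoluteness computation $\norm{\ka}^{\HOD(V_\delta)}=\ka$, whereas you verify that computation and run the collapse argument internally to $\HOD(V_\delta)$; both are valid.
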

\begin{proof}
Take $\gamma<\ka$, $\alpha>\ka$ and $x \in V_\alpha \cap \HOD(V_\delta)$.
We may assume $\alpha$ and $\gamma$ are limit ordinals and $\gamma \ge \delta$.
By the previous lemma,
there is $X \prec V_\alpha \cap \HOD(V_\delta)$ such that
$x \in X \in \HOD(V_\delta)$, $V_\gamma \cap \HOD(V_\delta) \subseteq X$, and $\norm{X}^{\HOD(V_\delta)}=\gamma$.
Then $\norm{X} \le \norm{X}^{\HOD(V_\delta)}=\gamma<\ka$,
so the transitive collapse of $X$ must be in $V_\ka$.
\end{proof}

An uncountable cardinal $\ka$ is said to be \emph{inaccessible}
if for every $\gamma<\ka$, there is no cofinal map from $V_\gamma$ into $\ka$.
Every inaccessible cardinal is regular, and 
in $\ZFC$, this definition is equivalent to the standard one.
One can check that every regular weakly LS cardinal is inaccessible.

The following corollary is already proved in 
Schlutzenberg \cite{S}.
\begin{cor}
Let $\ka$ be an uncountable cardinal,
and suppose $\ka$ is inaccessible in $\HOD(V_\ka)$.
Then $\ka$ is a weakly LS cardinal in $\HOD(V_\ka)$.
In particular, if $\ka$ is inaccessible then
$\ka$ is regular weakly LS  in $\HOD(V_\ka)$.
\end{cor}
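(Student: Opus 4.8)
The plan is to verify, directly inside $M:=\HOD(V_\ka)$, the two clauses of the norm-reformulation of ``weakly LS'' from the introduction, now in the critical case $\delta=\ka$ where the preceding corollary (which needs $\ka>\delta$) does not apply. Throughout I write $\norm{\cdot}^M$, $\cf^M$, etc.\ for computations in $M$; since $V_\ka\in M$ and $M$ is transitive, $V_\gamma=(V_\gamma)^M$ for $\gamma\le\ka$, so the hypothesis that $\ka$ is inaccessible in $M$ is literally the statement that in $M$ there is no cofinal map from $V_\gamma$ into $\ka$ for any $\gamma<\ka$. First I would check $\norm{\ka}^M=\ka$: the inequality $\norm{\ka}^M\le\norm{V_\ka}^M=\ka$ is automatic, while if $\norm{\ka}^M=\gamma<\ka$ then $M$ has a surjection from $V_\gamma$ onto $\ka$, hence a cofinal map $V_\gamma\to\ka$, contradicting inaccessibility in $M$.

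The substance is the submodel clause: given $\alpha\ge\ka$ and $A\subseteq V_\alpha\cap M$ with $\norm{A}^M<\ka$, I must produce $X\prec V_\alpha\cap M$ with $X\in M$, $A\subseteq X$, and $\norm{X}^M<\ka$. I would re-run the inductive Skolem-hull construction of Lemma \ref{5.1}, using a surjection $\sigma:\theta\times V_\ka\to V_\alpha\cap M$ in $M$, with one modification. In Lemma \ref{5.1} each Skolem witness contributes a whole block $\sigma``(\{\alpha_{\varphi,\vec x}\}\times V_\ka)$, which has norm $\ka$; this is exactly the obstruction when $\delta=\ka$. Instead, for a formula $\varphi$ and $\vec x\in{}^{<\om}X_n$ with $\exists w\,\varphi(\vec x,w)$ true in $V_\alpha\cap M$, after fixing the least index $\alpha_{\varphi,\vec x}\in\theta$ I would let $\rho_{\varphi,\vec x}$ be the least rank of an $s\in V_\ka$ with $\varphi(\vec x,\sigma(\alpha_{\varphi,\vec x},s))$, and adjoin only the \emph{truncated} block $\sigma``(\{\alpha_{\varphi,\vec x}\}\times V_{\rho_{\varphi,\vec x}+1})$. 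This block still contains a witness and has norm $\le\rho_{\varphi,\vec x}+1<\ka$, so by Tarski--Vaught $X:=\bigcup_n X_n\prec V_\alpha\cap M$. Since the rules for choosing $\alpha_{\varphi,\vec x}$, $\rho_{\varphi,\vec x}$ and the truncated blocks are all definable in $M$ from $\sigma$ and $A$, the recursion $\seq{X_n\mid n<\om}$ can be carried out in $M$, so $X\in M$.

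The key estimate, and the main obstacle, is keeping $\norm{X}^M<\ka$. I would maintain surjections $f_n:V_{\gamma_n}\to X_n$ in $M$ with $\gamma_n<\ka$ limit ordinals, so that $\norm{{}^{<\om}X_n}^M\le\gamma_n$. Composing $(\varphi,\vec x)\mapsto\rho_{\varphi,\vec x}$ with $f_n$ yields a map from a set of norm $<\ka$ into $\ka$ lying in $M$; here it is crucial that the hypothesis is inaccessibility (no cofinal map from $V_\gamma$), not mere regularity, since without choice a set of norm $<\ka$ need not inject into an ordinal $<\ka$. Inaccessibility in $M$ thus bounds this map, giving $\sup_{\varphi,\vec x}(\rho_{\varphi,\vec x}+1)<\ka$ and hence $\gamma_{n+1}:=\norm{X_{n+1}}^M<\ka$. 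Because $\ka$ is regular in $M$ (inaccessibility implies regularity) and $\seq{\gamma_n\mid n<\om}$ is countable, $\zeta:=\sup_n\gamma_n<\ka$, and combining the $f_n$ produces a surjection witnessing $\norm{X}^M\le\norm{\om\times V_\zeta}^M=\zeta<\ka$. This establishes the submodel clause, so $\ka$ is weakly LS in $M$.

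Finally, for the ``in particular'' clause: if $\ka$ is inaccessible in $V$, then since $\HOD(V_\ka)\subseteq V$ there is no cofinal map $V_\gamma\to\ka$ in $\HOD(V_\ka)$ for any $\gamma<\ka$, so $\ka$ is inaccessible, and hence regular, in $\HOD(V_\ka)$. The main part then gives that $\ka$ is a regular weakly LS cardinal in $\HOD(V_\ka)$.
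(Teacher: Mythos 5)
Your proof is correct, but it takes a different route from the paper's. The paper's own proof reuses Lemma \ref{5.1} as a black box with $\beta=\delta=\ka$: it produces a hull $X \prec V_\alpha \cap \HOD(V_\ka)$ with $V_\ka \subseteq X$ and $\norm{X}^{\HOD(V_\ka)}=\ka$, takes a surjection $\sigma:V_\ka \to X$ in $\HOD(V_\ka)$, and then uses inaccessibility in $\HOD(V_\ka)$ once, to argue that $\{\eta<\ka \mid \sigma``V_\eta \prec V_\alpha \cap \HOD(V_\ka)\}$ contains a club, so that some $\sigma``V_\eta$ with $V_\gamma \subseteq \sigma``V_\eta$ is the desired small elementary submodel. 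You instead never form the norm-$\ka$ hull: you re-run the recursion inside the proof of Lemma \ref{5.1}, truncating each Skolem block $\sigma``(\{\alpha_{\varphi,\vec x}\}\times V_\ka)$ at the least witness rank $\rho_{\varphi,\vec x}$, invoking inaccessibility at every stage $n$ to bound $\sup_{\varphi,\vec x}\rho_{\varphi,\vec x}$ and regularity to bound $\sup_n \gamma_n$. The two arguments rest on the same mechanism (inaccessibility of $\ka$ in $\HOD(V_\ka)$ bounds the ranks of Skolem witnesses relative to a definable surjection $\theta \times V_\ka \to V_\alpha \cap \HOD(V_\ka)$), but the decompositions differ: the paper's proof is shorter because it quotes the lemma and concentrates the bounding into a single club argument, while yours is self-contained (at the cost of duplicating the lemma's construction) and has the virtue of isolating explicitly why inaccessibility, rather than mere regularity, is the right hypothesis in the absence of choice --- a point the paper leaves implicit. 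Your treatment of $\norm{\ka}^{\HOD(V_\ka)}=\ka$, of membership of the construction in $\HOD(V_\ka)$, and of the ``in particular'' clause via downward absoluteness of inaccessibility all match what the paper needs and leaves tacit.
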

\begin{proof}
Take $\gamma<\ka$, $\alpha>\ka$ and $x \in V_\alpha \cap \HOD(V_\ka)$.
By Lemma \ref{5.1},
we can find $X \prec V_\alpha \cap \HOD(V_\ka)$
such that $x \in X \in \HOD(V_\ka)$, 
$V_\ka \subseteq X$, 
and $\norm{X}^{\HOD(V_\ka)}=\ka$.
Take a surjection $\sigma:V_\ka \to X$ in $\HOD(V_\ka)$.
Since $\ka$ is inaccessible in $\HOD(V_\ka)$,
the set $\{\eta<\ka \mid \sigma``V_\eta \prec V_\alpha \cap \HOD(V_\ka)\}$
contains a club in $\ka$,
hence we can find $\eta<\ka$ such that
$V_\gamma \subseteq \sigma``V_\eta \prec V_\alpha \cap \HOD(V_\ka)$.
\end{proof}
\begin{note}
For an uncountable cardinal $\ka$,
if $\ka$ is not regular in $\HOD(V_\ka)$
then $\cf(\ka)^{\HOD(V_\ka)}=\cf(\ka)$.
\end{note}

\begin{cor}
Let $\delta$ be a limit ordinal
and $\nu \le \cf(\delta)$ a weakly LS cardinal.
Then $\nu$ is weakly LS in $\HOD(V_\delta)$.
\end{cor}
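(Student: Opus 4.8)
The plan is to verify the reformulated criterion for weakly LS inside $\calM:=\HOD(V_\delta)$: that $\norm{\nu}^{\calM}=\nu$, and that every structure in $\calM$ with a small parameter set has a small elementary submodel \emph{lying in} $\calM$. The first clause is routine. Since $\calM\subseteq V$, every $\calM$-surjection is a $V$-surjection, so $\norm{\nu}^{\calM}\ge\norm{\nu}^{V}=\nu$; conversely the map on $V_\nu\cap\calM$ sending each ordinal below $\nu$ to itself (and everything else to $0$) lies in $\calM$ and witnesses $\norm{\nu}^{\calM}\le\nu$, so $\norm{\nu}^{\calM}=\nu$. (Note also that $\nu$ stays a cardinal in $\calM$, as $\calM$ has fewer bijections than $V$.) Hence the whole content is the L\"owenheim--Skolem clause, and I would phrase it concretely: fixing $\gamma<\nu$, a limit $\alpha>\delta$, and $x\in V_\alpha\cap\calM$, I must produce $X\in\calM$ with $X\prec V_\alpha\cap\calM$, $V_\gamma\subseteq X$, $x\in X$, and $\norm{X}^{\calM}<\nu$.

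First I would apply Lemma \ref{5.1} with $\beta=\delta$ to obtain, in $\calM$, a surjection $\sigma\colon V_\delta\to X_0$ onto some $X_0\prec V_\alpha\cap\calM$ with $V_\delta\subseteq X_0$ and $x\in X_0$. Pulling the membership relation back along $\sigma$ gives a structure $\calN=\seq{V_\delta;E,\dots}\in\calM$ (with constants for a $\sigma$-preimage of $x$ and a predicate naming $V_\gamma$) whose $\sigma$-quotient is $V_\alpha\cap\calM$. The target will be $X=\sigma``S$ for a small $E$-elementary $S\subseteq V_\delta$; since $\sigma``S$ is a surjective image of $S$, one automatically gets $\norm{\sigma``S}^{\calM}\le\norm{S}^{\calM}<\nu$, and $V_\gamma\subseteq\sigma``S$, $x\in\sigma``S$ are arranged by putting the relevant preimages into $S$. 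So everything reduces to manufacturing a small $E$-elementary $S\subseteq V_\delta$ that belongs to $\calM$.

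To produce such an $S$ I would invoke that $\nu$ is weakly LS \emph{in $V$}: working in $V$, the elementary-submodel characterization of weakly LS (the proposition of Section 2) applied to $\calN$ and to a parameter set of $\norm{\cdot}^{V}$-size $<\nu$ capturing $V_\gamma$ and the preimage of $x$ yields $S^{*}\prec\calN$ with $\norm{S^{*}}^{V}<\nu$. Here the hypothesis $\nu\le\cf(\delta)$ enters decisively: since $\norm{S^{*}}^{V}<\nu\le\cf(\delta)$, Lemma \ref{3.22} (with $x=S^{*}$, $\alpha=\delta$, and weakly LS cardinal $\nu$) forbids a cofinal map from $S^{*}$ into $\delta$, so the ranks of the elements of $S^{*}$ are bounded and $S^{*}\subseteq V_\eta$ for a single $\eta<\delta$. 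Passing to transitive collapses, the collapse of $S^{*}$ lands in $V_\nu\subseteq V_\delta\subseteq\calM$, and the collapse of the ambient reflecting structure relates $\calM$ to $\HOD$ of a \emph{small, collapsed} parameter sitting below $\nu$.

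The main obstacle, and the step I expect to demand the most care, is precisely that $S^{*}$ is built in $V$ and need not belong to $\calM$: a small subset of $V_\delta$ encodes genuine choice information and, unlike a set of ordinals, is not automatically ordinal-definable from $V_\delta$, so one cannot simply transport $S^{*}$ into $\calM$. My intended way around this is to reflect the \emph{existence} of a small elementary $S$ into $\calM$ rather than the particular $S^{*}$: having localized all $V_\delta$-parameters to $V_\eta$ with $\eta<\delta$ and all abstract data to a collapse lying in $V_\nu\subseteq\calM$, I would re-run the construction \emph{inside} $\calM$ against the bounded parameter $V_\eta$, in the regime where $\nu$ sits above the parameter --- exactly the situation $\ka>\delta'$ treated by the corollary following Lemma \ref{5.1}, now with the collapsed parameter in place of $V_{\delta'}$. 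Reconciling such an internally reconstructed $S$ with genuine elementarity into the original $V_\alpha\cap\calM$ (as opposed to into the collapsed copy) is the delicate point on which the argument turns; once it is secured, $X=\sigma``S$ is the required submodel. For singular $\nu$ I would, where possible, instead reduce to the regular case by exhibiting $\nu$ as a limit of weakly LS cardinals and using that a limit of weakly LS cardinals is weakly LS.
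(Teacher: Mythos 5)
You have reproduced the paper's skeleton exactly --- Lemma~\ref{5.1} with $\beta=\delta$ to get a surjection $\sigma\colon V_\delta\to X_0$ in $\HOD(V_\delta)$ onto $X_0\prec V_\alpha\cap\HOD(V_\delta)$, the relations on $V_\delta$ pulled back along $\sigma$, the application of weak LS-ness of $\nu$ \emph{in $V$} to that structure, and the use of $\norm{S^{*}}<\nu\le\cf(\delta)$ together with Lemma~\ref{3.22} to bound the ranks of the members of $S^{*}$ below some $\eta<\delta$ --- but you then declare the decisive step an unresolved ``delicate point,'' and the workaround you sketch does not close it. The point you are missing is that after the boundedness step there is nothing left to do: since $\delta$ is a limit ordinal, $S^{*}\subseteq V_\eta$ gives $S^{*}\in V_{\eta+1}\subseteq V_\delta$, and $V_\delta\subseteq\HOD(V_\delta)$, because every element of $V_\delta$ belongs to $\mathrm{trcl}(\{V_\delta\})$ and is therefore itself an allowed parameter, hereditarily so since $V_\delta$ is transitive. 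Hence $S^{*}\in\HOD(V_\delta)$ automatically, so $\sigma``S^{*}\in\HOD(V_\delta)$, and the proof finishes by checking $V_\gamma\subseteq\sigma``S^{*}\prec V_\alpha\cap\HOD(V_\delta)$ and that its transitive collapse lies in $V_\nu$. Your assertion that a small subset of $V_\delta$ ``is not automatically ordinal-definable from $V_\delta$'' conflates \emph{subsets} of $V_\delta$ (which indeed may fail to be in $\HOD(V_\delta)$ when unbounded in rank) with \emph{elements} of $V_\delta$ (which never fail to be). Converting the former into the latter is precisely what the hypothesis $\nu\le\cf(\delta)$ buys, and you had already performed that conversion.

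The substitute plan is, moreover, not viable as stated. ``Re-running the construction inside $\calM=\HOD(V_\delta)$'' to produce a small elementary submodel lying in $\calM$ is circular: the ability to do this is exactly the weak LS property of $\nu$ in $\calM$ that is to be proved, and the corollary following Lemma~\ref{5.1} that you invoke carries an inaccessibility hypothesis in the inner model that is not available here. Nor does elementarity into a transitive collapse of the ambient structure transfer back to elementarity into $V_\alpha\cap\HOD(V_\delta)$; and $\HOD$ of a collapsed parameter is simply a different model. Finally, your fallback for singular $\nu$ --- writing $\nu$ as a limit of weakly LS cardinals --- is not available either: a weakly LS cardinal need not be a limit of weakly LS cardinals. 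None of these detours is needed; the one-line observation $S^{*}\in V_\delta\subseteq\HOD(V_\delta)$ is the paper's entire resolution of your ``main obstacle.''
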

\begin{proof}
For $\gamma<\nu$, $\alpha>\delta$ and $x \in V_\nu \cap \HOD(V_\delta)$,
we can take $X \prec V_\alpha \cap \HOD(V_\delta)$
such that $x \in X \in \HOD(V_\delta)$, 
$V_\delta \subseteq X$, 
and there is a surjection $\sigma:V_\delta \to X$ in $\HOD(V_\delta)$.
Define $R_0, R_1 \subseteq V_\delta^2$ by
$a \mathrel{R_0} b \iff \sigma(a) \in \sigma(b)$, and
$a \mathrel{R_1} b \iff \sigma(a) = \sigma(b)$.
We know $R_0, R_1 \in \HOD(V_\delta)$.
Since $\nu$ is weakly LS,
we can find $\seq{Y; \in, R_0 \cap Y,R_1 \cap Y} \prec
\seq{V_\delta; \in, R_0, R_1}$
such that $V_\gamma \subseteq Y$, $\norm{Y}<\nu$,
and there is $a \in Y$ with $\sigma(a)=x$.
Since $\norm{Y}<\nu \le \cf(\delta)$,
there is no cofinal map from $Y$ to $\delta$.
In particular $\sup\{\mathrm{rank}(y) \mid y \in Y\}<\delta$, 
so $Y \in V_\delta$ and $Y \in \HOD(V_\delta)$.
Then one can check that $V_\gamma \subseteq \sigma``Y \prec V_\alpha \cap \HOD(V_\delta)$,
as required.
\end{proof}

When $\norm{\ka}=\ka$ and $\ka$ is singular in $\HOD(V_\ka)$, 
we do not know whether $\ka$ is weakly LS in $\HOD(V_\ka)$.

\begin{note}
$\HOD(V_\delta)$ and $\HOD(V_\ka)$ in this section can be replaced by
$L(V_\delta)$ and $L(V_\ka)$.
\end{note}

\section{On elemtary embeddings}
Woodin \cite{W} proved that if
$\ka$ is a singular limit of supercompact cardinals and
 there is a set $A$ of ordinals with $\ka^+=(\ka^+)^{L[A]}$,
then there is no non-trivial elementary embedding $j:V_{\ka+2} \to V_{\ka+2}$.
We can prove the same result using weakly LS cardinals.
Note that whenever $A$ is a set of ordinals,
$\HOD(A)$  is a transitive model of $\ZFC$ and $A \in \HOD(A)$.
 
\begin{prop}\label{3.1}
Suppose $\ka$ is a singular weakly LS cardinal.
If there is a set $A$ of ordinals with $\ka^+=(\ka^+)^{\HOD(A)}$,
then there is no non-trivial elementary embedding $j:V_{\ka+2} \to V_{\ka+2}$.
\end{prop}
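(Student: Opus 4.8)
The plan is to run Kunen's argument at $\kappa^+$, importing the Jónsson function we need from the $\ZFC$ model $\HOD(A)$. Suppose toward a contradiction that $j \colon V_{\kappa+2} \to V_{\kappa+2}$ is a non-trivial elementary embedding with $\kappa_0=\crit(j)$. First I would record the basic facts. Since $V_{\kappa+2}$ satisfies ``$\kappa$ is the largest cardinal'' and this is preserved by $j$, we get $j(\kappa)=\kappa$. Every ordinal $\alpha<\kappa^+$ is the order type of a well-ordering $R$ of $\kappa$, and such an $R$ lies in $V_{\kappa+1}$; hence every subset of $\kappa^+$ is coded by a set of such well-orderings, i.e.\ by an element of $V_{\kappa+2}$. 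Consequently the structure $(\kappa^+;<)$, together with all of its subsets and with functions on it of the relevant kind, is faithfully represented inside $V_{\kappa+2}$, and $j$ induces an elementary self-embedding $\hat\jmath \colon \kappa^+ \to \kappa^+$ by $\hat\jmath(\ot(R))=\ot(j(R))$. This $\hat\jmath$ is well defined (as $j$ preserves ``$R\cong R'$''), order preserving, satisfies $\hat\jmath \restriction \kappa = j \restriction \kappa$ and $\hat\jmath(\alpha)\ge\alpha$, and has critical point $\kappa_0$. Writing $X=\hat\jmath``\kappa^+$, one checks $\kappa_0 \notin X$, so $X \subsetneq \kappa^+$, while $\ot(X)=\kappa^+$ and $\sup X = \kappa^+$ (the latter from $\hat\jmath(\alpha)\ge\alpha$).

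Next I would pull a Jónsson function out of $\HOD(A)$. As $A$ is a set of ordinals, $\HOD(A)$ is a model of $\ZFC$, and by hypothesis $(\kappa^+)^{\HOD(A)}=\kappa^+$. Working in $\HOD(A)$, fix a Jónsson function of \emph{finite} exponent $G \colon [\kappa^+]^{<\om} \to \kappa^+$, so that $G``[B]^{<\om}=\kappa^+$ for every $B$ of size $\kappa^+$; the classical construction produces such a $G$ from a sequence $\langle e_\alpha \mid \alpha<\kappa^+\rangle$ of surjections $e_\alpha \colon \kappa \to \alpha$. I use finite exponent deliberately: a finite subset of $\kappa^+$ is coded by a single well-ordering of $\kappa$, so $[\kappa^+]^{<\om}$ and $G$ are coded by subsets of $\kappa^+$, hence by elements of $V_{\kappa+2}$, and $j$ acts on these codes. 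By elementarity the image $G'=\hat\jmath(G)$ is again a Jónsson function for $\kappa^+$, and for every finite $t \in [\kappa^+]^{<\om}$ we have $\hat\jmath(t)=\hat\jmath``t$ together with the commutation $G'(\hat\jmath``t)=\hat\jmath(G(t))$.

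The contradiction then falls out. Take any finite $s \subseteq X$ and write $s=\hat\jmath``t$ with $t\in[\kappa^+]^{<\om}$. By commutation, $G'(s)=\hat\jmath(G(t)) \in \hat\jmath``\kappa^+ = X$. Thus $G'``[X]^{<\om} \subseteq X$. On the other hand the Jónsson property of $G'$ applied to $X$ (which has order type $\kappa^+$) yields $G'``[X]^{<\om}=\kappa^+$. Since $X \subsetneq \kappa^+$, this is the desired contradiction.

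The main obstacle is exactly this last application of the Jónsson property: $X=\hat\jmath``\kappa^+$ is defined from the external map $j$ and is not a member of $\HOD(A)$, whereas $G$ was only known to be Jónsson inside $\HOD(A)$. Passing through the coding in $V_{\kappa+2}$ turns ``$G$ is Jónsson'' into a statement quantifying over all subsets of $\kappa^+$ in $V$, of which $\HOD(A)$ has fewer, so the property is not automatically inherited. The resolution is to take $G$ from the explicit construction above: its Jónsson property is witnessed by an absolute search using the surjections $e_\alpha$ (which remain genuine surjections in $V$) together with cofinally many elements of the target set, and $X=\hat\jmath``\kappa^+$ is dense below its supremum in precisely the sense this search requires, since for $\beta=\hat\jmath(\alpha)$ one has $\hat\jmath``\alpha \subseteq X \cap \beta$. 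Hence $G'``[X]^{<\om}=\kappa^+$ holds in $V$. Throughout, the hypothesis that $\kappa$ is a singular weakly LS cardinal enters through Theorem \ref{1.2}, which guarantees that $\kappa^+$ is regular and thus a genuine successor cardinal supporting the Jónsson machinery.
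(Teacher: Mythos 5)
Your setup is sound as far as it goes: inducing $\hat\jmath$ on $\ka^+$ via codes in $V_{\ka+2}$, using Theorem \ref{1.2} for the regularity of $\ka^+$, and the closure fact $G'``[X]^{<\om}\subseteq X$ for $X=\hat\jmath``\ka^+$ are all fine. But the step you yourself flag as ``the main obstacle'' is a genuine gap, and the proposed resolution does not close it. First, the difficulty appears earlier than you locate it: the sentence ``by elementarity, $G'=\hat\jmath(G)$ is again a J\'onsson function'' is already unjustified, because $\HOD(A)$ is defined by quantification over all of $V$, so ``$B\in\HOD(A)$'' is not expressible inside the structure $V_{\ka+2}$; the only version of J\'onssonness that $j$ can transfer is the one $V_{\ka+2}$ can express, which quantifies over codes of \emph{all} $B\subseteq\ka^+$ in $V$. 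Thus elementarity requires knowing $G$ is J\'onsson \emph{in $V$}, and this is precisely what is unknown: J\'onssonness is a $\Pi_1$ property over $\p(\ka^+)$ and does not pass upward from an inner model. Second, the claimed fix is unsupported and, as stated, false. The existence of finite-exponent J\'onsson functions on a successor cardinal is not a ``classical construction from surjections $e_\alpha:\ka\to\alpha$''; it is a hard theorem (Tryba--Woodin for successors of regulars, Shelah via pcf for successors of singulars), and the hard part is exactly regenerating $\ka$ from a set $B$ whose trace on $\ka$ is deficient --- which is your situation, since $X\cap\ka=j``\ka$ omits the whole interval $[\crit(j),j(\crit(j)))$. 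A function built from the $e_\alpha$'s alone, such as $G(\{\xi,\alpha\})=e_\alpha(\xi)$, generates $\ka^+$ only from sets $B$ whose intersection with $\ka$ happens to hit the right preimages, and there is no reason $j``\ka$ does. Worse, the needed statement cannot follow from any soft absoluteness argument: \emph{every} function of the form $G'=\hat\jmath(G)$ satisfies $G'``[X]^{<\om}\subseteq X$, so proving ``$G'``[X]^{<\om}=\ka^+$'' for some transferred $G'$ is literally equivalent to refuting the existence of $j$; it cannot be extracted from the surjectivity of the $e_\alpha$'s plus the density property $\hat\jmath``\alpha\subseteq X\cap\hat\jmath(\alpha)$, which is all your argument offers.

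The paper's proof shows what a working transfer mechanism looks like, and it is instructive that your argument never invokes the hypothesis it relies on. Instead of a J\'onsson function, the paper takes $F$ to be $V$'s club filter on $\ka^+$ restricted to cofinality-$\om$ points; by Theorem \ref{1.2}(2) (a consequence of the singular weakly LS hypothesis that your proof does not use) this filter is $\ka^+$-complete in $V$, hence $F\cap\HOD(A)$ is a $\ka^+$-complete filter \emph{inside} $\HOD(A)$, and Ulam's theorem there produces $\crit(j)$-many pairwise disjoint $F\cap\HOD(A)$-positive sets. Positivity with respect to this hybrid filter does transfer upward: if such a set were nonstationary in $V$, its complement would contain a $V$-club restricted to cofinality $\om$, hence would lie in $F\cap\HOD(A)$, contradicting positivity. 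One then runs Woodin's stationary-partition proof of Kunen's theorem through the $V_{\ka+2}$-codes. So the combinatorial object you import from $\HOD(A)$ must be one whose relevant property is certified against $V$'s clubs rather than against $V$'s arbitrary subsets of $\ka^+$; stationary partitions have this feature, J\'onsson functions do not. Replacing your last two paragraphs by this stationary-set argument repairs the proof, and is exactly what the paper does.
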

\begin{proof}
Suppose such an elementary  $j:V_{\ka+2} \to V_{\ka+2}$ exists.
Let $F$ be the club filter over $\ka^+$ restricted to
the set $\{\alpha<\ka^+ \mid \cf(\alpha)=\om\}$,
it  is $\ka^+$-complete.
$F$ is ordinal definable, hence $F \cap \HOD(A) \in \HOD(A)$,
and $F \cap \HOD(A)$ is a $\ka^+$-complete filter in $\HOD(A)$.
Since $\HOD(A)$ computes $\ka^+$ correctly and is a model of $\ZFC$,
we have that $F \cap \HOD(A)$ is not $\ka^+$-saturated in $\HOD(A)$,
hence is not $\mathrm{crit}(j)$-saturated.
Take pairwise disjoint $F$-positive sets $\{E_\alpha \mid \alpha<\mathrm{crit}(j)\} \in \HOD(A)$.
Each $E_\alpha$ is a stationary set in $\ka^+$,
and we may assume that $E_\alpha
\subseteq \{\eta<\ka^+ \mid \cf(\eta)=\om\}$.
Hence we obtain $\mathrm{crit}(j)$-many pairwise disjoint stationary subsets
of $\{\eta<\ka^+ \mid \cf(\eta)=\om\}$.
This partition can be coded in $V_{\ka+2}$,
and using this coded partition we can derive the contradiction
(e.g., see Kanamori \cite{K}).
\end{proof}

The assumption in the previous proposition can be weakened as follows.
\begin{prop}
Suppose $\ka$ is a singular weakly LS cardinal.
If there is a set $A$ of ordinals such that $\ka^+$ is not measurable in $\HOD(A)$,
then there is no non-trivial elementary embedding $j:V_{\ka+2} \to V_{\ka+2}$.
\end{prop}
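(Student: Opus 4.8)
The plan is to run the proof of Proposition~\ref{3.1} unchanged except for the one step that manufactures disjoint stationary sets. So suppose, toward a contradiction, that a nontrivial $j\colon V_{\ka+2}\to V_{\ka+2}$ exists and set $\delta=\crit(j)$. By Theorem~\ref{1.2} the club filter on $\ka^+$ is normal and $\ka^+$-complete in $V$, and $\ka^+$ is regular. Exactly as in Proposition~\ref{3.1}, it is enough to produce $\delta$ many pairwise disjoint stationary subsets of $\{\eta<\ka^+:\cf(\eta)=\om\}$ lying in $\HOD(A)$; the canonical $\ZF$-surjection of $\p(\ka\times\ka)$ onto $\ka^+$ then codes the resulting partition as an element of $V_{\ka+2}$, and the Kunen-style computation recalled there (see Kanamori~\cite{K}) gives the contradiction. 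Thus the whole problem reduces to a splitting statement about stationarity, to be carried out inside the $\ZFC$-model $\HOD(A)$.

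To set this up, let $\overline{F}=F\cap\HOD(A)$, where $F$ is the club filter on $\ka^+$ in $V$. Since $F$ is ordinal definable, $\overline{F}\in\HOD(A)$, and the normality and $\ka^+$-completeness of $F$ in $V$ descend to $\overline{F}$ in $\HOD(A)$ (a $\HOD(A)$-indexed intersection of $<\ka^+$ members, or a diagonal intersection of $\ka^+$ members, has the same value computed in either model, and that value again lies in $\HOD(A)$). The useful point is that the dual ideal of $\overline{F}$ is precisely $\{Z\in\HOD(A):Z$ is nonstationary in $V\}$: for $Z\in\HOD(A)$ we have $\ka^+\setminus Z\in\overline{F}$ iff $\ka^+\setminus Z$ contains a $V$-club. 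Hence a set in $\HOD(A)$ is $\overline{F}$-positive if and only if it is \emph{stationary in $V$}, so it suffices to split an $\overline{F}$-positive set into $\delta$ many disjoint $\overline{F}$-positive pieces in $\HOD(A)$; concentrating these on $\{\eta<\ka^+:\cf(\eta)=\om\}$ is arranged as in Proposition~\ref{3.1}.

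The remaining task is to show, in $\HOD(A)$, that $\overline{F}$ is not $\ka^+$-saturated; non-saturation yields $\ka^+$, hence $\delta$, many pairwise disjoint $\overline{F}$-positive sets. When $\ka^+$ is a successor cardinal in $\HOD(A)$ --- in particular whenever $\ka^+=(\ka^+)^{\HOD(A)}$, the hypothesis of Proposition~\ref{3.1} --- this is immediate from the Ulam matrix used there, since no $\ka^+$-complete filter extending the co-bounded filter on a successor cardinal is $\ka^+$-saturated. The genuinely new case is that $\ka^+$ is a limit (so weakly inaccessible) cardinal in $\HOD(A)$, where the Ulam matrix is unavailable. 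Here I would argue by the saturated-ideal dichotomy: if the normal $\ka^+$-complete filter $\overline{F}$ were $\ka^+$-saturated, it would have to be prime on some positive set, giving a $\ka^+$-complete ultrafilter on $\ka^+$ and so making $\ka^+$ measurable in $\HOD(A)$, contrary to hypothesis. The main obstacle is exactly this implication: extracting an honest ultrafilter in $\HOD(A)$ from $\ka^+$-saturation of $\overline{F}$ in the limit-cardinal case. In general $\ka^+$-saturation only produces a generic (or inner-model) measurable, so the delicate part is to use the normality of $\overline{F}$ --- together with the atomless alternative for the quotient being excluded this far above the continuum of $\HOD(A)$ --- to rule out the atomless quotient and thereby locate a genuine measure in $\HOD(A)$ itself.
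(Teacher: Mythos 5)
Your reduction to producing $\crit(j)$ many pairwise disjoint $V$-stationary subsets of $\{\eta<\ka^+ \mid \cf(\eta)=\om\}$ lying in $\HOD(A)$, and your observation that for sets in $\HOD(A)$ positivity for $\overline{F}=F\cap\HOD(A)$ coincides with $V$-stationarity, match the paper; so does the Ulam-matrix treatment of the case where $\ka^+$ is a successor cardinal of $\HOD(A)$. But the entire content of this proposition beyond Proposition \ref{3.1} is the case where $\ka^+$ is a limit cardinal of $\HOD(A)$, and there your plan rests on the implication you yourself flag as the obstacle: that $\ka^+$-saturation of the $\ka^+$-complete filter $\overline{F}$ yields an atom, hence a genuine measure, in $\HOD(A)$. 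This implication is not a theorem of $\ZFC$ and cannot be repaired. The Tarski--Solovay dichotomy for a $\lambda$-complete, $\nu$-saturated, atomless ideal bounds $\lambda$ by (roughly) $2^{<\nu}$, so it excludes atomlessness only when the saturation degree $\nu$ is \emph{strictly below} the completeness degree $\lambda$ and $2^{<\nu}<\lambda$; when $\nu=\lambda=\ka^+$ the atomless alternative reads $\ka^+\le 2^{\ka}$, which is always true, so ``being far above the continuum of $\HOD(A)$'' gives no traction. Indeed, by Kunen's classical Suslin-tree construction it is consistent that a non-measurable (not even weakly compact) inaccessible $\lambda$ carries a $\lambda$-complete, $\lambda$-saturated nonprincipal ideal, so saturation at the completeness level simply does not produce measures. (Your side claim that normality of $F$ descends to $\overline{F}$ is also unjustified --- it would require choosing $V$-clubs for $\ka^+$ many indices, and normality of the club filter on $\ka^+$ is open even in $V$, as noted in Section 3 --- but this is secondary, since normality is not needed.)

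The paper avoids the problem by never aiming at non-$\ka^+$-saturation: Kunen's argument only needs $\crit(j)$ many disjoint stationary sets, and $\crit(j)<\ka<\ka^+$. One first notes that $\ka$ is a strong limit cardinal in $\HOD(A)$: if $2^\gamma\ge\ka$ held in $\HOD(A)$ for some $\gamma<\ka$, there would be a surjection from $V_{\gamma+2}$ onto $\ka$, contradicting weak LS-ness of $\ka$. Hence $(2^{<\crit(j)})^{\HOD(A)}<\ka<\ka^+$, and Tarski's theorem applies with saturation degree $\crit(j)$ strictly below the completeness degree $\ka^+$: if $\overline{F}$ were $\crit(j)$-saturated in $\HOD(A)$, its quotient would have an atom, giving a nonprincipal $\ka^+$-complete ultrafilter over $\ka^+$ in $\HOD(A)$ and making $\ka^+$ measurable there, contrary to hypothesis. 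So $\overline{F}$ is not $\crit(j)$-saturated, which hands you exactly the $\crit(j)$ many pairwise disjoint $\overline{F}$-positive (hence $V$-stationary) sets needed; the rest is as in Proposition \ref{3.1}. In short: your framework is right, but the saturation level must be dropped from $\ka^+$ to $\crit(j)$, and the missing ingredient is the observation that $\ka$ is strong limit in $\HOD(A)$.
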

\begin{proof}
First note that $\ka$ is strong limit in $\HOD(A)$.
Let $F$ be the filter defined as in Proposition \ref{3.1}.
By Tarski's theorem,
if $F \cap \HOD(A)$ is $\mathrm{crit}(j)$-saturated in $\HOD(A)$,
then $\ka^+$ is measurable in $\HOD(A)$,
this contradicts to the assumption.
Hence $F$ cannot be $\mathrm{crit}(j)$-saturated in $\HOD(A)$,
and we can take a large stationary partition of
$\{\eta<\ka^+ \mid \cf(\eta)=\om\}$.
The rest is the same to before.
\end{proof}

\begin{cor}
Suppose there are proper class many weakly LS cardinals.
If there is a non-trivial elementary embedding $j:V \to V$,
then $\{\ka \mid$ $\ka$ is singular and $\ka^+$ is measurable in $\HOD\}$
forms a proper class.
\end{cor}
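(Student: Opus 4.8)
The plan is to reduce everything to the preceding proposition (the weakening of Proposition~\ref{3.1}) via its contrapositive: if $\mu$ is a singular weakly LS cardinal and there is a non-trivial elementary embedding $V_{\mu+2} \to V_{\mu+2}$, then for \emph{every} set $A$ of ordinals $\mu^+$ is measurable in $\HOD(A)$, and in particular in $\HOD = \HOD(\emptyset)$. So it suffices to produce proper class many singular weakly LS cardinals $\mu$ with $j(\mu)=\mu$, since then $j \restriction V_{\mu+2}$ is a non-trivial elementary embedding of $V_{\mu+2}$ into itself.

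First I would fix a non-trivial elementary embedding $j : V \to V$ with critical point $\crit(j)$, and recall the standard facts, valid in $\ZF$, that $j(\beta) \ge \beta$ for every ordinal $\beta$ and that $j$ is continuous at every ordinal of cofinality strictly below $\crit(j)$; since $\crit(j) > \om$, in particular $j$ is continuous at every ordinal of cofinality $\om$. I would also use that the class $C$ of weakly LS cardinals is closed and unbounded in the ordinals: it is unbounded by hypothesis, and it is closed because a limit of weakly LS cardinals is again weakly LS (noted after the definition).

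The main step is the fixed-point construction. Given an arbitrary ordinal $\gamma > \crit(j)$, I would build an increasing sequence $\seq{\alpha_n \mid n<\om}$ by alternately closing under $C$ and under $j$: let $\alpha_0$ be the least element of $C$ above $\gamma$, let $\alpha_{2n+1} = j(\alpha_{2n})$, and let $\alpha_{2n+2}$ be the least element of $C$ above $\alpha_{2n+1}$. Put $\mu = \sup_n \alpha_n$. Then $\mu \in C$, since $\mu$ is a limit of the weakly LS cardinals $\seq{\alpha_{2n} \mid n<\om}$, and $\mu$ is singular of cofinality $\om$. Moreover $j``\mu \subseteq \mu$: any $\xi < \mu$ satisfies $\xi < \alpha_{2n}$ for some $n$, whence $j(\xi) < j(\alpha_{2n}) = \alpha_{2n+1} < \mu$. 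As $\cf(\mu) = \om < \crit(j)$, continuity gives $j(\mu) = \sup j``\mu \le \mu$, and together with $j(\mu) \ge \mu$ this yields $j(\mu) = \mu$. Since $\gamma$ ranges over a proper class of ordinals, there are proper class many such $\mu$.

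For each such $\mu$, from $j(\mu) = \mu$ we get $j(\mu+2) = \mu+2$, so $j \restriction V_{\mu+2}$ is an elementary embedding of $V_{\mu+2}$ into itself; it is non-trivial because $\crit(j) < \gamma < \mu$, so $j$ moves $\crit(j) \in V_{\mu+2}$. Applying the contrapositive recorded above with $A = \emptyset$ shows that $\mu^+$ is measurable in $\HOD$. Hence every $\mu$ produced lies in $\{\ka \mid$ $\ka$ is singular and $\ka^+$ is measurable in $\HOD\}$, and this class is therefore proper. I expect the fixed-point construction to be the only real obstacle, as it must simultaneously keep $\mu$ inside the club class $C$ (forcing the interleaving with $C$) and make $\mu$ genuinely fixed by $j$ (forcing the closure under $j$ together with continuity at cofinality $\om$); everything else is a direct appeal to the preceding proposition.
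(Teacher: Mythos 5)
Your proof is correct and is essentially the intended argument: the paper states this corollary without proof, and the evident route—which you carry out—is to produce proper class many singular weakly LS cardinals $\mu$ fixed by $j$ and then apply the contrapositive of the preceding proposition with $A=\emptyset$, so that $\mu^+$ is measurable in $\HOD$. Your interleaving construction correctly supplies the one subtle point (that $j``\mu\subseteq\mu$ together with $\cf(\mu)=\om<\crit(j)$ gives $j(\mu)=\mu$), which is exactly how the paper itself exploits the club class $\{\nu \mid \sup(j``\nu)=\nu\}$ together with the club class of weakly LS cardinals in its related arguments.
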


\section{On ultrapowers}
A cardinal $\ka$ is said to be \emph{critical}
if there is a transitive class $M$ and an elementary embedding $j:V \to M$ with
critical point $\ka$ (Schlutzenberg \cite{S}).

\begin{note}
The term \emph{critical} was used in the first draft of \cite{S}, 
but this term was already used in Hayut-Karagila \cite{HK} with another definition.
The latest version of \cite{S} uses the term \emph{$V$-critical} instead of
\emph{critical} (see also \cite{S2}).
\end{note}

While the definition of critical cardinal is a second-order statement,
in $\ZFC$ it is equivalent to a first order statement: There is a $\ka$-complete ultrafilter over $\ka$,
that is, $\ka$ is measurable.
Recently Schlutzenberg \cite{S} showed that under the existence of proper class many 
weakly LS cardinals, the definition of critical cardinal  is equivalent to a certain first order sentence. Actually, under the assumption, he showed that
the ultrapower by certain extender sequence satisfies \L os' theorem,
so the critical cardinals can be formalized by the existence of suitable 
extenders.
Moreover his argument can formalize strong cardinal as well.
See \cite{S} for the proof of the following proposition.
\begin{prop}
Suppose there are proper class many weakly LS cardinals.
Then for every cardinal $\ka$ the following are equivalent:
\begin{enumerate}
\item For every $\alpha>\ka$,
there is a weakly LS cardinal $\la>\alpha$ of uncountable cofinality,
a transitive set $N$, and an elementary embedding $j:V_\la \to N$ such that
the critical point of $j$ is $\ka$, $\alpha<j(\ka)$,
and $V_\alpha \subseteq N$.
\item For every $\alpha$,
there is a transitive class $M$ and
an elementary embedding $j:V \to M$ such that
the critical point of $j$ is $\ka$, $\alpha<j(\ka)$, and $V_\alpha \subseteq M$.
\item For every $\alpha$,
there is a definable transitive class $M$ and
a definable elementary embedding $j:V \to M$ such that
the critical point of $j$ is $\ka$, $\alpha<j(\ka)$, and $V_\alpha \subseteq M$.
\end{enumerate}
\end{prop}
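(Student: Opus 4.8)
The plan is to establish the cycle $(3)\Rightarrow(2)\Rightarrow(1)\Rightarrow(3)$. The implication $(3)\Rightarrow(2)$ is immediate, since a definable class embedding is in particular a class embedding. For $(2)\Rightarrow(1)$, fix $\alpha>\ka$ and a class embedding $j\colon V\to M$ witnessing (2), so $\crit(j)=\ka$, $\alpha<j(\ka)$ and $V_\alpha\subseteq M$. Using the hypothesis of proper class many weakly LS cardinals together with their closure under limits, I would choose a weakly LS cardinal $\la>\alpha$ of uncountable cofinality, for instance the supremum of an increasing $\om_1$-sequence of weakly LS cardinals above $\alpha$. Setting $N=V_{j(\la)}^M$ and $j'=j\restriction V_\la$, the fact that $\ka<\la$ and $j(V_\la)=V_{j(\la)}^M$ gives that $j'\colon V_\la\to N$ is elementary with $\crit(j')=\ka$ and $j'(\ka)=j(\ka)>\alpha$; moreover $\alpha<j(\ka)\le j(\la)$ and $V_\alpha\subseteq M$ yield $V_\alpha=V_\alpha^M\subseteq V_{j(\la)}^M=N$, so (1) holds.

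The substantive direction is $(1)\Rightarrow(3)$, and here I would follow the extender-ultrapower approach indicated in the paragraph preceding the proposition. Fix $\alpha$; by (1) pick a weakly LS cardinal $\la>\alpha$ of uncountable cofinality and an elementary $j\colon V_\la\to N$ with $\crit(j)=\ka$, $\alpha<j(\ka)$ and $V_\alpha\subseteq N$. From this local embedding I would derive a $V_\alpha$-extender $E=\seq{E_a\mid a\in V_\alpha}$, where each $E_a\subseteq\p(V_\ka)$ is defined by $X\in E_a\iff a\in j(X)$; this makes sense because $X\in V_\la$, $j(X)\subseteq V_{j(\ka)}^N$, and $a\in V_\alpha\subseteq V_{j(\ka)}^N$. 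Crucially $E$ is a set and is definable from the set $j\restriction V_{\ka+1}$ together with $\alpha$. I would then form the extender ultrapower $\ult(V,E)$, whose elements are classes $[a,f]_E$ with $a\in V_\alpha$ and $f\in V$ a function with domain $V_\ka$ representing $j(f)(a)$, with ultrapower embedding $j_E(x)=[a,c_x]_E$; both $\ult(V,E)$ and $j_E$ are definable from $E$. Once the ultrapower is shown to be well-founded, transitive collapse produces a definable class $M$ and a definable embedding $j_E\colon V\to M$, and the factor map through $j$ shows $\crit(j_E)=\ka$, $\alpha<j_E(\ka)$ and $V_\alpha\subseteq M$, which is exactly (3).

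The hard part is to verify, without $\AC$, that \L o\'s' theorem holds for $\ult(V,E)$ and that the ultrapower is well-founded; this is precisely where the weakly LS cardinal $\la$ of uncountable cofinality enters. For the existential step of \L o\'s', given $\seq{u\in V_\ka\mid \exists y\,\varphi(y,\vec f(u))}\in E_a$ one must produce a single $g\in V$ with $\seq{u\mid \varphi(g(u),\vec f(u))}\in E_a$, which in $\ZFC$ is a direct use of choice. I would instead reflect the search for witnesses into an elementary submodel $X\prec V_\beta$ supplied by the weakly LS property of $\la$, with $\norm{X}<\la$ and all relevant parameters in $X$: the small norm makes the witnesses live in a transitive-collapsed set inside $V_\la$ where they can be located, and elementarity transfers them back. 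The uncountable cofinality of $\la$ would be used both to guarantee that the $<\cf(\la)$ many relevant measures $E_a$ and their supports are captured below a single level $V_\delta$ with $\delta<\la$, and, more importantly, to rule out $\om$-indexed descending sequences in the ultrapower: any such sequence is coded by countably much data, which by $\cf(\la)>\om$ reflects into $N$ and contradicts the well-foundedness of the set model $N$. I expect the delicate bookkeeping of which functions to admit into the ultrapower, so that definability and the choiceless \L o\'s' argument go through simultaneously, to be the principal technical obstacle.
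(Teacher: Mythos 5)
First, a point of comparison: the paper itself does not prove this proposition --- it states it and defers to Schlutzenberg with ``See \cite{S} for the proof'', indicating only that the method is the ultrapower by a certain \emph{extender sequence}. That sequence (which also drives the closely related construction present, in commented-out form, in the paper's source) is the coherent sequence of \emph{normal fine} ultrafilters $U_\gamma=\{X\subseteq \p(V_\gamma)\mid j``V_\gamma\in j(X)\}$ for $\gamma$ cofinal in $\la$, and the definable model is the direct limit of the ultrapowers $\ult(V,U_\gamma)$. Your choice in (1)$\Rightarrow$(3) of the classical point-indexed extender $E_a=\{X\subseteq V_\ka\mid a\in j(X)\}$, $a\in V_\alpha$, is where your argument genuinely breaks: these measures have single points as seeds and are \emph{not normal}, and normality is precisely the engine of the choiceless {\L}o\'s' argument. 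In the working argument one uses weak LS of $\la$ to get a small $Y\prec V_\beta$ containing witnesses, a surjection $\pi:V_{\epsilon'}\to Y$, and an $\epsilon<\la$ such that $U_\epsilon$-almost every $y$ is an elementary submodel of $V_\epsilon$ whose transitive collapse is exactly $V_{\epsilon'}$; then the witness-indices form a nonempty set $h(y)\subseteq y$, and \emph{pressing down} yields a single index $b^*$, hence a single choice function $y\mapsto\pi(\sigma_y(b^*))$. With measures concentrating on points $u\in V_\ka$ there is no analogue: after your reflection step you hold, for $E_a$-many $u$, a nonempty set $B_u\subseteq V_{\epsilon'}$ of witness-indices, and uniformly selecting from the $B_u$'s is exactly an instance of choice --- $V_{\epsilon'}$ need not be well-orderable and nothing presses down. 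The phrase ``elementarity transfers them back'' conceals this unsolved selection problem. (A further hidden cost even for the normal measures: the seed $j``V_\gamma$ must be an \emph{element} of $N$; the paper's related construction secures this by assuming $V_\la\subseteq N$ and $\sup(j``\la)=\la$, neither of which statement (1) gives you, so this must also be addressed.)

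There are two further gaps. For well-foundedness, ruling out $E$-descending $\om$-sequences cannot suffice: in $\ZF$ without $\DC$, a relation with no descending $\om$-sequences need not have minimal elements, and it is minimal elements of arbitrary nonempty sets that the Mostowski collapse (hence the passage to a transitive definable $M$) requires; moreover your mechanism --- a descending sequence is ``coded by countably much data'' which ``reflects into $N$'' --- is not an argument, since its entries are arbitrary functions $f_n:V_\ka\to V$, not objects of $N$. The paper's adjacent material instead produces $E$-minimal elements directly, once again via normality, combined with $\cf(\la)>\om$ (or a weakly LS cardinal $\le\cf(\la)$ together with Lemma \ref{3.22}). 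Finally, in (2)$\Rightarrow$(1) your production of a weakly LS cardinal of uncountable cofinality as ``the supremum of an increasing $\om_1$-sequence'' is not valid in $\ZF$: $\cf(\om_1)$ can equal $\om$, so that supremum may have countable cofinality. The correct device is the one the paper uses in Section 4: take a singular weakly LS cardinal $\nu>\alpha$ (e.g.\ the $\om$-th weakly LS cardinal above $\alpha$, using that weakly LS cardinals form a closed unbounded class), note that $\nu^+$ is regular by Theorem \ref{1.2}, and let $\la$ be the $\nu^+$-th weakly LS cardinal above $\nu$, so that $\cf(\la)=\nu^+>\om$. Your (3)$\Rightarrow$(2) and the remainder of (2)$\Rightarrow$(1) are fine.
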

Other large cardinals defined by extenders, such as Woodin cardinal, superstrong cardinal, and $I_2$-embedding, can be formalized by a similar way.

\subsection*{Acknowledgments}
This research was supported by 
JSPS KAKENHI Grant Nos. 18K03403 and 18K03404.

%

\end{document}